\newtheorem{theorem}{Theorem}[section]
\newtheorem{lemma}{Lemma}[section]
\theoremstyle{Definition}
\newtheorem{definition}{Definition}[section]
\theoremstyle{remark}
\newtheorem{remark}[theorem]{Remark}
\numberwithin{equation}{section}
\begin{document}

\begin{flushleft}
 { {  WIGNER  DISTRIBUTION AND ASSOCIATED UNCERTAINTY PRINCIPLES IN THE FRAMEWORK OF OCTONION LINEAR CANONICAL TRANSFORM  }}

\parindent=0mm \vspace{.2in}

{\bf{ Aamir H. Dar$^{1},$ and M. Younus Bhat$^{2, *}$ }}
\end{flushleft}

{{\it $^{1}$ Department of  Mathematical Sciences,  Islamic University of Science and Technology, Kashmir. E-mail: $\text{ahdkul740@gmail.com}$}}

{{\it $^{2, *}$ Department of  Mathematical Sciences,  Islamic University of Science and Technology, Kashmir. E-mail: $\text{g gyounusg@gmail.com}$}}

\begin{quotation}
\noindent
{\footnotesize {\sc Abstract.} The most recent generalization of octonion Fourier transform (OFT) is the
octonion linear canonical transform (OLCT) that  has become popular in present era due to its applications in  color image and signal processing. On the other hand the applications of Wigner distribution (WD) in signal  and image analysis cannot be excluded. In this paper, we introduce novel integral transform coined as the  Wigner distribution in the octonion linear canonical transform domain (WDOL). We first propose the definition of
the one dimensional WDOL (1D-WDOL), we extend its relationship with 1D-OLCT and 1D-OFT. Then explore  several important properties  of 1D-WDOL, such as reconstruction formula,
Rayleigh's theorem. Second, we introduce the definition of three dimensional WDOL (3D-WDOL) and establish its relationships with the WD associated with quaternion LCT (WD-QLCT) and 3D-WD in LCT domain (3D-WDLCT). Then we study properties like  reconstruction formula,
Rayleigh's theorem and Riemann-Lebesgue Lemma associated with 3D-WDOL.  The crux of this paper lies in developing  well known uncertainty  principles (UPs) including Heisenberg's UP, Logarithmic UP and Hausdorff-Young inequality associated with WDOL\.
 \\

{ Keywords:}  Octonion  linear canonical transform(OLCT);  Wigner distribution (WD); Rayleigh's theorem; Riemann-Lebesgue Lemma ;  Uncertainty principle (UP).\\

\noindent
\textit{2000 Mathematics subject classification: } 42B10; 46S10; 94A12; 42A38;  30G30.}
\end{quotation}
\section{ \bf Introduction}
\noindent
 Of all the time-frequency distributions, Wigner distribution (WD) \cite{d1}-\cite{d10} is regarded as the most important distribution. WD is considered as an important  frequency analysis tool  that is more suitable for the analysis of time-frequency
characteristics  of chirp-like signals, such as the
linear-frequency-modulated (LFM) signals that are frequently used in wireless communications,
 medical imaging  sonar, radar and many more. For any finite energy signals $f$ and $g$ the WD is defined as \cite{3f}-\cite{4f}
\begin{equation}\label{int 1}
\mathcal W_{f,g}(t,w)=\int_{\mathbb R} f\left(t+\frac{x}{2}\right)g^*\left(t-\frac{x}{2}\right)e^{-iwx}dx,
\end{equation}
where $f\left(t+\frac{x}{2}\right)g^*\left(t-\frac{x}{2}\right)$ represents the instantaneous auto-correlation relation of the
signal $f(x)$. It is also viewed as a local spatial frequency spectrum of the signal, with tremendous  applications in optics, matrix optics, wave optics, geometrical optics, Fourier
 ray optics and radiometry \cite{4z}.\\

In the last few decades, the researcher's community has shown greater  interest in the study of   multidimensional hyper-complex signals defined by means of Cayley–Dickson algebras and there applications in image filtering, watermarking, color and image processing, edge detection and pattern recognition \cite{1b}-\cite{6b}. 
  The Cayley–Dickson algebra of order 4 is known as quaternions. In quaternionic analysis, the quaternion Fourier transform (QFT) is
the most basic and important time-frequency analysis tool for multidimensional quaternionic signals. QFTs are frequently  studied in present era because of its wide  range of applications in  signal and image processing. QFTs are best studied in  \cite{9a}-\cite{12a}. The QFT is regarded as the generalization of the real
and complex FT to the quaternionic case. As in recent times the generalization of integral transforms to quaternion setting is popular, in this regard Yang and Kou \cite{45l} generalized linear canonical transform
(LCT) to quaternion-valued signals, known as the the quaternion LCT (QLCT), that is better effective signal processing tool than the classical  QFT amid its extra parameters, see\cite{13a,14a,15a,16a,17a,18a,WLCT}. Later, El Haoui and Hitzer generalizes the offset linear canonical transform (OLCT) to QOLCT \cite{23l}. Recently Dar and Bhat introduces new integral transform which generalizes the most neoteric quadractic-phase Fourier transform (QPFT) for the quaternion algebra \cite{qqpftl}. Authors in \cite{26l}-\cite{own3} introduced WD associated with QLCT (WD-QLCT) and OLCT (WD-OLCT)and studied their application in detection of quaternion LFM signals. \\
Moving towards other side, Cayley-Dickson algebra of order 8 is known as octonion algebra. Recently Hahn and Snopek  introduces the  octonion Fourier transform (OFT) \cite{hn}. From then OFT is becoming the hot area of research in modern signal processing. Later, authors in \cite{w1,w2,w3,w4} developed and studied the theory of OFT  extensively. Recently Gao and Li  generalized the OFT to the  octonion linear canonical transform (OLCT) \cite{li}  by substituting the Fourier kernel with the LCT kernel. Later Bhat and Dar \cite{ownoolct} introduced the octonion version of offset linear canonical transform.
Furthermore, authors in   \cite{stoft,ownstolct} introduced  octonion short-time Fourier transform and  octonion spectrum of 3d short-time LCT signals where they established classical properties besides establishing Pitt's, Lieb's and uncertainty inequalities. \\

 So motivated and inspired by the merits of Wigner distribution and octonion linear canonical transform, we in this paper propose the novel integral transform coined as the  Wigner distribution in the octonion linear canonical transform domain (WDOL), that gives a unified treatment for already existing classes of
signal processing tools. Therefore it is worthwhile to rigorously study the WDOL and
associated UPs which can be productive for signal processing theory and applications.

\subsection{Paper Contributions}\,\\

The contributions  of this  paper are summarized below:\\

\begin{itemize}
\item To introduce a novel integral transform coined as the  Wigner distribution in the octonion linear canonical transform domain (WDOL)\\

 \item To study the fundamental properties of the 1D-WDOL and 3D-WDOL , including the Reconstrucion Formula, Rayleigh's theorem and Riemann-Lebesgue lemma. \\

  \item To establish the relationship of 3D-WDOL with WD-QLCT and with 3D-WDLCT.\\
      
    \item To formulate several classes of uncertainty principles, such as the Heisenberg UP, logarithmic UP and the Hausdorff-Young inequality associated with WDOL.
\end{itemize}
\subsection{Paper Outlines}\,\\

The  paper is organized as follows: In Section \ref{sec 2}, we gave a brief review to the octonion algebra  and  properties of OFT and OLCT. The definition and the properties of the 1D-WDOL and 3D-WDOL
 are studied in Section \ref{sec 3}. In Section \ref{sec 4}, we establish several classes of uncertainty principles, such as the Heisenberg UP, logarithmic UP and the Hausdorff-Young inequality associated with the proposed transform.. Finally, a conclusion is drawn in Section \ref{sec 5}.

\section{ Preliminaries}
\label{sec 2}
In this section,  we collect some basic facts on the octonion algebra and the octonion Fourier transform (OFT) and octonion linear canonical transform(OLCT), which will be needed throughout the paper.

\subsection{ Octonion algebra} \  \\
The octonion algebra $\mathbb O,$ \cite{23} is generated by the eighth-order Cayley-Dickson construction. In this 
construction, a hyper-complex number  $o\in\mathbb O$ is an ordered pair $q_0,q_1\in\mathbb H$
\begin{eqnarray}\label{o1}
\nonumber o&=&(q_0,q_1)\\
\nonumber&=&((\xi_0,\xi_1),(\xi_2,\xi_3))\\
\nonumber&=&q_0+q_1.\tau_4\\
\nonumber&=&(\xi_0+\xi_1.\tau_2)+(\xi_2+\xi_3.\tau_2).\tau_4\\
\end{eqnarray}
which has equivalent form
\begin{equation}\label{o2}
o=s_o+\sum_{i=1}^7s_i\tau_i=s_0+s_1\tau_1+s_2\tau_2+s_3\tau_3+s_4\tau_4+s_5\tau_5+s_6\tau_6+s_7\tau_7
\end{equation}
thus  $o$ is a hyper-complex number defined by eight real numbers $s_i,i=0,1,\dots,7$ and seven imaginary units $\tau_i$ where $i=1,2,\dots,7.$ It should be noted that the $\mathbb O$ is non-commutative and non-associative algebra. We present a table to represent the multiplication
of imaginary units in the Cayley-Dickson algebra of octonions as .\cite{w2}\\
\begin{center} Table I \end{center}
\begin{center}{\small Multiplication Rules in Octonion Algebra.} \end{center}
\begin{equation*}\label{table}
\begin{array}{|c|c|c|c|c|c|c|c|c|}
\hline
  \cdot & 1 & \tau_1 & \tau_2 & \tau_3 & \tau_4 & \tau_5 & \tau_6 & \tau_7 \\
\hline
  1 & 1 & \tau_1 & \tau_2 & \tau_3 & \tau_4 & \tau_5 & \tau_6 & \tau_7 \\
\hline
  \tau_1 & \tau_1 & -1 & \tau_3 & -\tau_2 & \tau_5 & -\tau_4& -\tau_7& \tau_6 \\
\hline
  \tau_2 & \tau_2 & -\tau_3 & -1 & \tau_1 & \tau_6 & \tau_7 & -\tau_4 & -\tau_5 \\
\hline
  \tau_3 & \tau_3 & \tau_2 & -\tau_1 & -1 & \tau_7 & -\tau_6 & \tau_5 & -\tau_4 \\
\hline
  \tau_4 & \tau_4 & -\tau_5 & -\tau_6 & -\tau_7 & -1 & \tau_1 & \tau_2 & \tau_3 \\
  \hline
  \tau_5 & \tau_5 & \tau_4 & -\tau_7 & \tau_6 & -\tau_1 & -1 & -\tau_3 & \tau_2\\
\hline
  \tau_6 & \tau_6 & \tau_7 & \tau_4 & -\tau_5 & -\tau_2 & \tau_3 & -1 & -\tau_1 \\
  \hline
  \tau_7 & \tau_7 & -\tau_6 & \tau_5& \tau_4 & -\tau_3 & -\tau_2 & \tau_1 & -1 \\
  \hline
%   &  &  &  &  &  &  &  & 

\end{array}\end{equation*}
The conjugate of an octonion is defined as
\begin{equation}\label{o3}
\overline{o}=s_0-s_1\tau_1-s_2\tau_2-s_3\tau_3-s_4\tau_4-s_5\tau_5-s_6\tau_6-s_7\tau_7
 \end{equation}
 Therefore norm is defined by $|o|=\sqrt{o\overline{o}}$ and $|o|^2=\sum_{i=o}^7s_i.$ Also $|o_1o_2|=|o_1||o_2|,\forall o_1,o_2\in \mathbb O.$

From (\ref{o1}) it is clear that every $o\in \mathbb O$ can be reshaped in quaternion form like
\begin{equation}\label{o4}
o=a+b\tau_4
\end{equation}
where $a=s_0+s_1\tau_1+s_2\tau_2+s_3\tau_3$ and $b=s_4+s_5\tau_1+s_6\tau_2+s_7\tau_3$ are both quaternions. Evidently, we have  the following lemma.
\begin{lemma}\cite{w2}\label{lem1} Let $a,b\in \mathbb H,$ then\\
(1)\quad $\tau_4a=\overline a\tau_4;$ \qquad\quad(2)\quad$\tau_4(a\tau_4)=-\overline a;$\qquad\quad$(3)\quad(a\tau_4)\tau_4=-a;$\\
(4)\quad$a(b\tau_4)=(ba)\tau_4$;\quad\quad(5)\quad$(a\tau_4)b=(a\overline b)\tau_4;$\quad\quad(6)\quad$(a\tau_4)(b\tau_4)=-\overline b a.$
\end{lemma}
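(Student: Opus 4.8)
The plan is to derive all six identities from the single multiplication rule that the eighth-order Cayley--Dickson construction places on $\mathbb{O}$. Writing each octonion as an ordered pair of quaternions through the identifications $a \leftrightarrow (a,0)$ and $\tau_4 \leftrightarrow (0,1)$, so that $b\tau_4 \leftrightarrow (0,b)$, the construction equips $\mathbb{O}$ with the product
\[
(a,b)(c,d) = \left(\,ac - \overline{d}\,b,\ d\,a + b\,\overline{c}\,\right),
\]
where juxtaposition on the right is quaternion multiplication and the bar is quaternion conjugation. (One checks this is the correct normalization by confirming, for example, that it reproduces $\tau_4\tau_1 = -\tau_5$ and $\tau_1\tau_4 = \tau_5$ from Table~I, since $\tau_5 \leftrightarrow (0,\tau_1)$.) Every product occurring in the lemma is then a product of two such pairs, which this rule evaluates explicitly.

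With the rule in hand, each assertion is an immediate substitution in which some quaternion components vanish. For instance, (1) follows from $(0,1)(a,0) = (0,\overline{a})$, reading as $\tau_4 a = \overline{a}\tau_4$; identity (3) from $(0,a)(0,1) = (-a,0)$; and (6) from $(0,a)(0,b) = (-\overline{b}\,a,\,0)$, i.e.\ $(a\tau_4)(b\tau_4) = -\overline{b}\,a$. The remaining identities (2), (4) and (5) are obtained in the same fashion by setting the appropriate entries to zero and simplifying the two quaternion components. The only care required is to carry the conjugations faithfully and to respect the prescribed bracketing, because $\mathbb{O}$ is non-associative and the product of two pairs is not symmetric in its arguments; one must therefore never silently reassociate.

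As an alternative that presupposes nothing beyond Table~I, I would instead verify each identity directly on basis vectors. Since quaternion multiplication is bilinear and $a,b$ range over the span of $\{1,\tau_1,\tau_2,\tau_3\}$, it suffices to check the single-variable identities (1)--(3) in four cases each and the bilinear identities (4)--(6) in sixteen cases each, every case being settled by a table lookup together with $\overline{\tau_i} = -\tau_i$. I expect no genuine mathematical obstacle here: the lemma is a finite algebraic verification, and the only real difficulty is bookkeeping---applying the sign conventions of Table~I consistently and refusing to reparenthesize products, which octonionic non-associativity forbids. Either route delivers the result with no analytic input.
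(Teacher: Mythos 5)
Your proposal is correct, and it actually supplies more than the paper does: the paper states this lemma with no proof at all, importing it verbatim from Lian's octonionic Fourier transform paper \cite{w2}, so there is no internal argument to compare against. Judged on its own merits, your Cayley--Dickson route goes through. With the product $(a,b)(c,d)=\left(ac-\overline{d}\,b,\; da+b\,\overline{c}\right)$ and the identifications $a\leftrightarrow(a,0)$, $\tau_4\leftrightarrow(0,1)$, $b\tau_4\leftrightarrow(0,b)$, all six identities fall out exactly as you indicate; the three you leave implicit evaluate to $(0,1)(0,a)=(-\overline{a},0)$, $(a,0)(0,b)=(0,ba)$ and $(0,a)(b,0)=(0,a\overline{b})$, matching (2), (4) and (5) including the order reversal $ba$ in (4). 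The one point of genuine care is the one you yourself flag: several inequivalent sign conventions for the Cayley--Dickson doubling circulate in the literature, and the paper never writes its product rule explicitly --- it gives only Table I. Your normalization check against $\tau_1\tau_4=\tau_5$ and $\tau_4\tau_1=-\tau_5$ is the right safeguard, but two entries alone do not uniquely pin down the convention; to make the first route airtight you should confirm a few more entries (for instance $(0,\tau_1)(0,\tau_2)=(\tau_2\tau_1,0)=(-\tau_3,0)$, matching $\tau_5\tau_6=-\tau_3$ in Table I, and $(0,\tau_1)(0,1)=(-\tau_1,0)$, matching $\tau_5\tau_4=-\tau_1$), or simply fall back on your second route --- the finite basis-by-basis verification from Table I, bilinearity and $\overline{\tau_i}=-\tau_i$ --- which presupposes only what the paper actually provides and is fully rigorous, if tedious. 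Either way the lemma is established, and your insistence on never silently reassociating products is exactly the right discipline in the non-associative setting.
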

It is evident from the  above result that, for an octonion $a+b\tau_4, a,b \in \mathbb H,$ we have \\
\begin{equation}\label{o5} \overline{a+b\tau_4}=\overline{a}-b\tau_4\end{equation}
and \begin{equation}\label{o6} |a+b\tau_4|^2=|a|^2+|b|^2.\end{equation}
\begin{lemma}\label{lem1}\cite{li}
Let $\tilde o, \hat o\in\mathbb O$. Then $e^{\tilde o}.e^{\hat o}=e^{\tilde o+\hat o}$ iff $\tilde o.\hat o=\hat o.\tilde o$.
\end{lemma}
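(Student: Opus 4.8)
The plan is to work with the exponential defined by its power series $e^{o}=\sum_{n\ge 0}o^{n}/n!$. Three structural facts about $\mathbb{O}$ make this meaningful and keep the non-associativity under control. First, $\mathbb{O}$ is power-associative, so each power $o^{n}$ is unambiguous and $e^{\tilde o}$, $e^{\hat o}$, $e^{\tilde o+\hat o}$ are all well defined. Second, the octonion norm is multiplicative, $|o_1o_2|=|o_1||o_2|$ (stated in the preliminaries), whence $|o^{n}|=|o|^{n}$ and every such series converges absolutely, with $\sum_{n}|o|^{n}/n!=e^{|o|}$. Third — and this is the fact that really does the work — $\mathbb{O}$ is an alternative algebra, so by Artin's theorem the unital subalgebra $\mathcal{A}\subset\mathbb{O}$ generated by the two fixed elements $\tilde o,\hat o$ is associative and finite-dimensional. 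Every product occurring below lives in $\mathcal{A}$, so inside $\mathcal{A}$ I may compute exactly as in an ordinary associative normed algebra.

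For the implication ``$\tilde o\hat o=\hat o\tilde o\ \Rightarrow\ e^{\tilde o}e^{\hat o}=e^{\tilde o+\hat o}$'' I would replicate the classical argument inside $\mathcal{A}$. Absolute convergence licenses forming the Cauchy product $e^{\tilde o}e^{\hat o}=\sum_{n\ge 0}\sum_{k=0}^{n}\tfrac{\tilde o^{k}}{k!}\tfrac{\hat o^{\,n-k}}{(n-k)!}$. Since $\tilde o,\hat o$ associate (Artin) and commute (hypothesis), the binomial identity $(\tilde o+\hat o)^{n}=\sum_{k=0}^{n}\binom{n}{k}\tilde o^{k}\hat o^{\,n-k}$ is valid, so the inner sum equals $(\tilde o+\hat o)^{n}/n!$; summing over $n$ returns $e^{\tilde o+\hat o}$. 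The only points needing care are the rearrangement (supplied by norm multiplicativity) and the binomial expansion (supplied by associativity plus commutativity), and neither is an obstacle once $\mathcal{A}$ is in hand.

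The converse is where the genuine content sits, and I expect it to be the main obstacle, because comparing the two sides only at the single instance is too weak; one must compare them as functions. Accordingly I would introduce a real parameter $t$ and consider the $\mathbb{O}$-valued real-analytic maps $F(t)=e^{t\tilde o}e^{t\hat o}$ and $G(t)=e^{t(\tilde o+\hat o)}$, whose second-order expansions are $F(t)=1+t(\tilde o+\hat o)+t^{2}\big(\tfrac12\tilde o^{2}+\tilde o\hat o+\tfrac12\hat o^{2}\big)+O(t^{3})$ and $G(t)=1+t(\tilde o+\hat o)+\tfrac{t^{2}}{2}\big(\tilde o^{2}+\tilde o\hat o+\hat o\tilde o+\hat o^{2}\big)+O(t^{3})$, the expansion of $(\tilde o+\hat o)^{2}$ requiring only distributivity. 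Matching the coefficients of $t^{2}$, which is legitimate once $F\equiv G$ by uniqueness of Taylor coefficients, forces $\tilde o\hat o=\tfrac12(\tilde o\hat o+\hat o\tilde o)$, i.e. $\tilde o\hat o=\hat o\tilde o$. The delicate step I would scrutinise most carefully is the passage from the scalar identity $e^{\tilde o}e^{\hat o}=e^{\tilde o+\hat o}$ to the functional identity $F\equiv G$: the cleanly correct form of the statement is really this one-parameter version, and I would either prove it in that form or argue directly that the quadratic term of $e^{\tilde o}e^{\hat o}-e^{\tilde o+\hat o}$ already records the commutator $\tilde o\hat o-\hat o\tilde o$.
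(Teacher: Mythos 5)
The paper does not prove this lemma at all: it is quoted verbatim from the reference [li] (Gao--Li), so there is no in-paper argument to compare against, and your attempt must be judged on its own merits. Your forward implication is correct and complete: power-associativity makes $e^{o}$ well defined, the multiplicative norm gives absolute convergence and justifies the Cauchy product, and Artin's theorem (the subalgebra generated by $\tilde o,\hat o$ is associative) validates the binomial identity once commutativity is assumed. That is exactly the right way to handle the non-associativity, and no step there fails.

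The converse, however, contains the gap you yourself flagged, and the gap is not reparable: the hypothesis is the single identity $e^{\tilde o}e^{\hat o}=e^{\tilde o+\hat o}$, not the functional identity $F(t)\equiv G(t)$, and matching Taylor coefficients at $t^{2}$ is only legitimate in the latter case. In fact the single-instance converse is \emph{false}, already in the quaternion subalgebra of $\mathbb O$, so no argument can close the gap. Take
\begin{equation*}
\tilde o=2\pi\tau_1,\qquad \hat o=2\pi\left(-\tfrac12\tau_1+\tfrac{\sqrt3}{2}\tau_2\right).
\end{equation*}
For a pure unit $u$ one has $e^{\theta u}=\cos\theta+u\sin\theta$, so $e^{\tilde o}=e^{\hat o}=1$; moreover $\left|\tilde o+\hat o\right|=2\pi\left|\tfrac12\tau_1+\tfrac{\sqrt3}{2}\tau_2\right|=2\pi$, whence $e^{\tilde o+\hat o}=1$ as well, and the hypothesis $e^{\tilde o}e^{\hat o}=e^{\tilde o+\hat o}$ holds. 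Yet by the paper's multiplication table, $\tau_1\tau_2=\tau_3=-\tau_2\tau_1$ gives $\tilde o\hat o-\hat o\tilde o=4\pi^{2}\sqrt{3}\,\tau_3\neq 0$. So the ``only if'' half of the lemma, as stated here and in [li], is simply wrong; your instinct that ``the cleanly correct form of the statement is really this one-parameter version'' is exactly right, and the honest resolution is to restate the lemma either as the implication $\tilde o\hat o=\hat o\tilde o\Rightarrow e^{\tilde o}e^{\hat o}=e^{\tilde o+\hat o}$ (which suffices for every use of the lemma in this paper) or as the equivalence with $e^{t\tilde o}e^{t\hat o}=e^{t(\tilde o+\hat o)}$ for all real $t$, for which your second-order coefficient comparison is then a valid proof.
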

An octonion-valued function $f:\mathbb R^3\longrightarrow\mathbb O$ has following explicit form
\begin{eqnarray}\label{ofun}
\nonumber f(x)&=&f_0+f_1(x)\tau_1+f_2(x)\tau_2+f_3(x)\tau_3+f_4(x)\tau_4+f_5(x)\tau_5+f_6(x)\tau_6+f_7(x)\tau_7\\
\nonumber&=&f_0+f_1\tau_1+(f_2+f_3\tau_1)\tau_2+[f_4+f_5\tau_1+(f_6+f_7\tau_1)\tau_2]\tau_4\\
&=&\tilde f(x)+\hat f(x)\tau_4
\end{eqnarray}
where each $f_i(x)$ is a real valued functions, $\tilde f,\hat f \in \mathbb H$ are as in(\ref{o1}) and $x=(x_1,x_2,x_3)\in \mathbb R^3.$

\subsection{ Octonion Fourier Transform and Octonion Linear Canonical Transform} \  \\
Let $f:\mathbb R\rightarrow \mathbb O$ be an octonion-valued function, then 1D octonion Fourier transform (OFT)\cite{li,1doft} is given by 
\begin{equation}\label{eqn 1doft}
\mathcal F_{\tau_4}[f](w)=\int_{\mathbb R}f(x)e^{-\tau_4 2\pi x w}dx,
\end{equation}
and its inverse is given by 
\begin{equation}\label{eqn inv 1doft}
f(x)=\mathcal F^{-1}_{\tau_4}\left\{\mathcal F_{\tau_4}[f]\right\}(x)=\int_{\mathbb R}\mathcal F_{\tau_4}[f](w)e^{\tau_4 2\pi x w}dw.
\end{equation} 
See \cite{1doft} for the properties of 1D OFT.\\

Later, authors in \cite{w2,w4} introduced 3D OFT of an octonion-valued signal $f\in L^1(\mathbb R^3,\mathbb O)\cap L^2(\mathbb R^3,\mathbb O)$ as
\begin{equation}\label{eqn 3doft}
\mathcal F_{\tau_1,\tau_2,\tau_4}[f]({\bf w})=\int_{\mathbb R}f(x)e^{-\tau_1 2\pi x_1 w_1}e^{-\tau_2 2\pi x_2 w_2}e^{-\tau_4 2\pi x_3 w_3}d{\bf x},
\end{equation}
where ${\bf x}=(x_1,x_2,x_3)\in \mathbb R^3$,  ${\bf w}=(w_1,w_2,w_3)\in \mathbb R^3$ and  multiplication in above integral is done from left to right. Also the order of imaginary units in (\ref{eqn 3doft}) is fixed because octonions  are neither commutative nor associative.\\

The inverse 3D OFT is given as
\begin{equation}\label{eqn inv  3doft}
f({\bf x})=\mathcal F^{-1}_{\tau_1,\tau_2,\tau_4}\left\{\mathcal F_{\tau_1,\tau_2,\tau_4}[f]\right\}({\bf x})=\int_{\mathbb R}f(x)e^{\tau_4 2\pi x_3 w_3}e^{\tau_2 2\pi x_2 w_2}e^{\tau_1 2\pi x_1 w_1}d{\bf w},
\end{equation}
where ${\bf x}=(x_1,x_2,x_3)\in \mathbb R^3$,  ${\bf w}=(w_1,w_2,w_3)\in \mathbb R^3.$\\

Authors in  \cite{li} introduced octonion linear canonical transform (OLCT)  as:\\
\begin{definition}[1D-OLCT \cite{li}]\label{def 1dolct}
  The 1D  OLCT of any octonion-valued  signal $f\in L^1(\mathbb R^3,\mathbb O),$ with respect to the uni-modular  matrix $\Lambda=(a,b,c,d)$ is given by
\begin{equation}\label{onedOLCT}
\mathcal L^{\Lambda}_{\tau_4}[f] (w)= \int_{\mathbb R} f(x)K_{\Lambda}^{\tau_4}(x,w)dx,
\end{equation}
where
 \begin{equation}\label{eqn 1D olct ker}
K^{\tau_4}_{\Lambda}(x,w)=\dfrac{1}{\sqrt{2\pi |b|}} e^{\frac{{\tau_4}}{2b}\big[ax^2-2xw+dw^2-\frac{\pi}{2}\big]},\quad b\neq0\,
\end{equation}
 with the inversion formula
 \begin{eqnarray}\label{eqn inv 1dolct}
 \nonumber f(x)&=&\{\mathcal L^{\Lambda}_{\tau_4}\}^{-1}[\mathcal L^{\Lambda}_{\tau_4}[f]](x)\\
  \nonumber&=&\int_{\mathbb R}\mathcal L^{\Lambda}_{\tau_4}\{f\}(w)K_{\Lambda}^{-\tau_4}(x,w)dx,\\
 \end{eqnarray}
 where $K_{\Lambda}^{-\tau_4}(x,w)=K_{\Lambda^{-1}}^{\tau_4}(w,x)$ and $\Lambda^{-1}=(d,-b,-c,a).$\\
 \end{definition}
\begin{lemma}\cite{li}\label{lem rel 1doft-1dolct}The 1D OLCT can be reduced to  1D OFT  by following equation:
\begin{equation}\label{rel 1doft-1dolct}
\mathcal L^{\Lambda}_{\tau_4}[f] (w)= \frac{1}{\sqrt{2\pi|b|}}\mathcal F_{\tau_4}[g]\left(\frac{w}{2\pi|b|}\right)e^{\tau_4\left(\frac{d}{2b}w^2-\frac{\pi}{4}\right)},
\end{equation} 
where $g(x)=f(x)e^{\tau_4\frac{a}{2b}x^2}.$
\end{lemma}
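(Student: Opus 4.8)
The plan is to start directly from the definition \eqref{onedOLCT} of the 1D-OLCT, substitute the explicit kernel \eqref{eqn 1D olct ker}, and then split the single chirp exponential into three factors: an $x$-only chirp that gets absorbed into $f$, a genuine OFT kernel, and a $w$-only phase that is pulled outside the integral. The whole argument rests on one observation: every exponent occurring in $K_\Lambda^{\tau_4}(x,w)$ is a \emph{real scalar} multiple of the single imaginary unit $\tau_4$, so all of these exponentials lie in the subalgebra $\mathbb{R}\oplus\mathbb{R}\tau_4$ of $\mathbb O$, which (since $\tau_4^2=-1$) is isomorphic to $\mathbb C$ and hence both commutative and associative. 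This is precisely the hypothesis required to apply the exponential law of Lemma \ref{lem1}.

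Concretely, I would first rewrite the kernel exponent as
\[ \frac{\tau_4}{2b}\big[ax^2-2xw+dw^2-\tfrac{\pi}{2}\big] = \frac{\tau_4 a}{2b}x^2 \;-\;\frac{\tau_4}{b}xw\;+\;\tau_4\Big(\frac{d}{2b}w^2-\frac{\pi}{4}\Big), \]
grouping the quadratic-in-$w$ term together with the constant phase into a single $w$-only summand. By Lemma \ref{lem1} the kernel exponential then factors as $e^{\tau_4 ax^2/2b}\,e^{-\tau_4 xw/b}\,e^{\tau_4(dw^2/2b-\pi/4)}$. I would absorb the first factor into the signal, setting $g(x)=f(x)e^{\tau_4 ax^2/2b}$ exactly as in the statement, so that the integrand of \eqref{onedOLCT} becomes $\tfrac{1}{\sqrt{2\pi|b|}}\,g(x)\,e^{-\tau_4 xw/b}\,e^{\tau_4(dw^2/2b-\pi/4)}$.

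The step that genuinely requires care is the regrouping, since $\mathbb O$ is non-associative and $\mathbb O$-valued integration commutes with multiplication by a constant only on a \emph{fixed} side. The justification is alternativity (Artin's theorem): $f(x)$ together with $\tau_4$ generate an associative subalgebra, so $(f(x)\,u)\,v=f(x)\,(u\,v)$ for all $u,v\in\mathbb{R}\oplus\mathbb{R}\tau_4$; this legitimizes both the formation of $g$ and the extraction of the $x$-independent factor $e^{\tau_4(dw^2/2b-\pi/4)}$ to the right of the integral. After extraction the remaining integral is $\int_{\mathbb R} g(x)\,e^{-\tau_4 xw/b}\,dx$, which I would match against the OFT kernel $e^{-\tau_4 2\pi x\eta}$ of \eqref{eqn 1doft} by reading off the frequency argument $\eta=\tfrac{w}{2\pi|b|}$, so that $e^{-\tau_4 2\pi x\eta}=e^{-\tau_4 xw/b}$ (the absolute value matching the $b>0$ normalization carried in the kernel). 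The integral is then exactly $\mathcal F_{\tau_4}[g]\!\big(\tfrac{w}{2\pi|b|}\big)$, and reinstating the prefactor $\tfrac{1}{\sqrt{2\pi|b|}}$ together with the extracted $w$-phase gives \eqref{rel 1doft-1dolct}.

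The main obstacle I anticipate is purely bookkeeping: keeping track of the side of multiplication at every stage. Both the OLCT and the OFT place their kernels to the \emph{right} of the signal, so the identification stays clean only if every chirp factor is kept on the right and no octonion is ever silently commuted past $f(x)$. Once it is recognized that all the phase factors live in the commutative-associative copy of $\mathbb C$ generated by $\tau_4$, the non-commutativity and non-associativity of $\mathbb O$ cause no real difficulty, and what remains is the routine scalar identity underlying the Fourier frequency rescaling.
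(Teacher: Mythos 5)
The paper offers no proof of this lemma at all---it is imported verbatim from \cite{li}---so there is nothing internal to compare against; judged on its own, your computation is correct and is exactly the standard argument (the one in the cited source): split the kernel exponent of (\ref{eqn 1D olct ker}), absorb the chirp $e^{\tau_4 ax^2/2b}$ into $f$ to form $g$, extract the $x$-independent $w$-phase, and read off the OFT of (\ref{eqn 1doft}) at frequency $\frac{w}{2\pi|b|}$, with the exponential law of Lemma \ref{lem1} and Artin's theorem on alternativity correctly invoked to license the regrouping in the non-associative algebra $\mathbb O$, since every phase factor lies in the associative subalgebra generated by $f(x)$ and $\tau_4$. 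Two caveats you inherited from the paper rather than created: the kernel as printed places $-\frac{\pi}{2}$ inside the bracket scaled by $\frac{1}{2b}$, which gives a constant phase $-\tau_4\frac{\pi}{4b}$ rather than the $b$-independent $-\tau_4\frac{\pi}{4}$ appearing in (\ref{rel 1doft-1dolct}) (your split silently uses the intended standard kernel, which is the only way the stated lemma comes out), and the identification $e^{-\tau_4 xw/b}=e^{-\tau_4 2\pi x\frac{w}{2\pi|b|}}$ requires $b>0$, as you yourself note---for $b<0$ the formula with $|b|$ would need a conjugated OFT or a sign adjustment.
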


\begin{definition}[3D-OLCT \cite{li}]\label{def 3dolct}
For every octonion-valued signal  $f\in L^1(\mathbb R^3,\mathbb O)\cap L^2(\mathbb R^3,\mathbb O)$, the 3D-OLCT with respect to the   matrix parameters $\Lambda_k=(a_k,b_k,c_k,d_k), $ satisfying $det(\Lambda_k)=1,\quad k=1,2,3$ is defined as
 \begin{equation}\label{threedOLCT}
\mathcal L^{\Lambda_1,\Lambda_2,\Lambda_3}_{\tau_1,\tau_2,\tau_4}\{f\}({\bf w})=\int_{\mathbb R^3}f({\bf x}) K^{\tau_1}_{\Lambda_1}(x_1,w_1)K^{\tau_2}_{\Lambda_1}(x_2,w_2)K^{\tau_4}_{\Lambda_3}(x_3,w_3)d{\bf x}
\end{equation}
where ${\bf x}=(x_{1},x_{2},x_3),\, {\bf w}=(w_{1},w_{2},w_3),$ and  multiplication in above integral is done from left to right and
\begin{equation}\label{k1}
K_{\Lambda_1}^{\tau_1}(x_1,w_1)=\dfrac{1}{\sqrt{2\pi |b_1|}} e^{\frac{{\tau_1}}{2b_1}\big[a_1x^2_1-2x_1w_1+d_1w^2_1-\frac{\pi}{2}\big]},\quad b_1\neq0\,
\end{equation}
 \begin{equation}\label{k2}
K_{\Lambda_2}^{\tau_2}(x_2,w_2)==\dfrac{1}{\sqrt{2\pi |b_2|}} e^{\frac{{\tau_2}}{2b_2}\big[a_2x^2_2-2x_2w_2+d_2w^2_2-\frac{\pi}{2}\big]},\quad b_2\neq0\,
\end{equation}
and
 \begin{equation}\label{k3}
K_{\Lambda_3}^{\tau_4}(x_3,w_3)=\dfrac{1}{\sqrt{2\pi |b_3|}} e^{\frac{{\tau_4}}{2b_3}\big[a_3x^2_3-2x_3w_3+d_3w^2_3-\frac{\pi}{2}\big]},\quad b_3\neq0 .
\end{equation}
with the inversion formula
\begin{eqnarray}\label{eqn inv 3dolct}
\nonumber f({\bf x})&=&\{\mathcal L^{\Lambda_1,\Lambda_2,\Lambda_3}_{\tau_1,\tau_2,\tau_4}\}^{-1}\left[\mathcal L^{\Lambda_1,\Lambda_2,\Lambda_3}_{\tau_1,\tau_2,\tau_4}\{f\} \right]({\bf x})\\
\nonumber&=&\int_{\mathbb R^3}\mathcal L^{\Lambda_1,\Lambda_2,\Lambda_3}_{\tau_1,\tau_2,\tau_4}\{f\}(w)K_{\Lambda^{-1}_3}^{\tau_4}(w_3,x_3)K_{\Lambda^{-1}_2}^{\tau_2}(w_2,x_2)K_{\Lambda^{-1}_1}^{\tau_1}(w_1,x_1)d{\bf w},\\
\end{eqnarray}
where $\Lambda^{-1}_k=(d_k,-b_k,-c_k,a_k)\in\mathbb R^{2\times 2},$ for $ k=1,2,3.$
\end{definition}
Further authors in \cite{li} expanded the kernel of 3D-OLCT as:

\begin{eqnarray}\label{eulerproduct}
 \nonumber K_{\Lambda_1}^{\tau_1}(x_1,w_1) K_{\Lambda_2}^{\tau_2}(x_2,w_2) K_{\Lambda_3}^{\tau_4}(x_3,w_3)&=&\frac{1}{2\pi\sqrt{2\pi|b_1b_2b_3|}}e^{\tau_1\theta_1}e^{\tau_2\theta_2}e^{\tau_4\theta_3}\\
\nonumber&=&\frac{1}{2\pi\sqrt{2\pi|b_1b_2b_3|}}(c_1+\tau_1s_1)(c_2+\tau_2s_2)(c_3+\tau_4s_3)\\
\nonumber&=&\frac{1}{2\pi\sqrt{2\pi|b_1b_2b_3|}}(c_1c_2c_3+s_1c_2c_3\tau_1+c_1s_2c_3\tau_2\\
\nonumber&\quad+& s_1s_2c_3\tau_3+c_1c_2s_3\tau_4+s_1c_2s_3\tau_5+c_1s_2s_3\tau_6+s_1s_2s_3\tau_7),\\
\end{eqnarray}
where $\theta_k={\frac{{1}}{2b_k}\big[a_kx^2_k-2x_kw_k+d_kw^2_k-\frac{\pi}{2}\big]},\quad c_k=\cos\theta_k$ and $s_k=\sin\theta_k,\quad k=1,2,3.$\\

\section{  Wigner Distribution In The Octonion Linear Canonical Domain} \  \\\label{sec 3}
In this section we formally introduce the definition of  Wigner distribution associated with octonion linear canonical transform (WDOL)and study its various properties.

According to 1D-OLCT, we can obtain the definition of 1D-WDOL as follows
\begin{definition}[1D-WDOL]\label{def 1D WDOL} The 1D-WDOL of any octonion-valued  signals $f,g\in L^1(\mathbb R^3,\mathbb O),$ with respect to the uni-modular  matrix $\Lambda=(a,b,c,d)$ is given by
\begin{equation}\label{eqn 1D WDOL}
\mathbb W_{f,g}^{\tau_4}(t,w)= \int_{\mathbb R} f\left(t+\frac{x}{2}\right)g^*\left(t-\frac{x}{2}\right)K_{\Lambda}^{\tau_4}(x,w)dx,
\end{equation}
where  kernel $K_{\Lambda}^{\tau_4}(x,w)$ is given by (\ref{eqn 1D olct ker}).
\end{definition}
Next we note that 1D-WDOL is an 1D-OLCT of a instantaneous correlation of octonion-valued functions $f$ and $g$ by means of remark as:
\begin{remark}From definition of 1D-OLCT, it is clear that if we take \begin{equation}\label{fun h}
h_t(x)=f\left(t+\frac{x}{2}\right)g^*\left(t-\frac{x}{2}\right),
\end{equation}
we have 
\begin{eqnarray}\label{1d WDOL in h}
\nonumber\mathbb W_{f,g}^{\tau_4}(t,w)\\
\nonumber&=&\mathcal L^{\Lambda}_{\tau_4}\left[f\left(t+\frac{x}{2}\right)g^*\left(t-\frac{x}{2}\right)\right] (w)\\
\nonumber&=&\mathcal L^{\Lambda}_{\tau_4}[h_t] (w).\\
\end{eqnarray}
\end{remark}
And by using Lemma \ref{lem rel 1doft-1dolct}, we obtain the relation between 1D-WDOL and 1D-OFT as :
\begin{equation}\label{eqn rel 1doft-1dolct}
\mathbb W_{f,g}^{\tau_4}(t,w)=\frac{1}{\sqrt{2\pi |b|}}\mathcal F_{\tau_4}\left[H_t\right]\left(\frac{w}{2\pi |b|}\right)e^{\tau_4(\frac{d}{2b}w^2-\frac{\pi}{4})},
\end{equation}
where $H_t(x)=h_t(x)e^{\tau_4\frac{a}{2b}x^2}.$
\begin{theorem}[1D-WDOL Reconstruction Formula]
For $f,g\in L^2(\mathbb R, \mathbb O)$ with $g(0)\ne 0$, we have the following inversion formula for 1D-WDOL
\begin{eqnarray}
f\left(\xi\right)&=&\frac{1}{g^*\left(0\right)}\int_{\mathbb R}\mathbb W_{f,g}^{\tau_4}\left(\frac{\xi}{2},w\right)K_{\Lambda}^{-\tau_4}(\xi,w)dw
\end{eqnarray}
\end{theorem}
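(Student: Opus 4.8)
The plan is to recognize the 1D-WDOL as a disguised 1D-OLCT and then invert it. The remark preceding the statement already records the crucial identity $\mathbb W_{f,g}^{\tau_4}(t,w)=\mathcal L^{\Lambda}_{\tau_4}[h_t](w)$, where $h_t(x)=f(t+x/2)g^*(t-x/2)$. Since for each fixed $t$ the function $h_t$ inherits square-integrability from $f,g\in L^2(\mathbb R,\mathbb O)$, I would invoke the 1D-OLCT inversion formula of Definition \ref{def 1dolct}. Applied to $h_t$, it yields
\begin{equation*}
h_t(x)=\int_{\mathbb R}\mathbb W_{f,g}^{\tau_4}(t,w)\,K_{\Lambda}^{-\tau_4}(x,w)\,dw,
\end{equation*}
which reconstructs the entire instantaneous correlation function from its WDOL.

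The second step is a specialization of variables that peels off $f$ alone. Setting $t=\xi/2$ and $x=\xi$ collapses the two arguments of $h_t$ into a single point,
\begin{equation*}
h_{\xi/2}(\xi)=f\!\left(\tfrac{\xi}{2}+\tfrac{\xi}{2}\right)g^*\!\left(\tfrac{\xi}{2}-\tfrac{\xi}{2}\right)=f(\xi)\,g^*(0),
\end{equation*}
and combining this with the inverted identity gives $f(\xi)\,g^*(0)=\int_{\mathbb R}\mathbb W_{f,g}^{\tau_4}(\xi/2,w)\,K_{\Lambda}^{-\tau_4}(\xi,w)\,dw$. Because $g(0)\ne 0$, the octonion $g^*(0)$ is invertible with $1/g^*(0)=\overline{g^*(0)}\,/\,|g^*(0)|^2$, so dividing out $g^*(0)$ isolates $f(\xi)$ and produces the stated reconstruction formula.

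The main obstacle is the non-associativity and non-commutativity of $\mathbb O$, which demands vigilance at two points. First, the OLCT inversion must be applied so that the transform value stands to the \emph{left} of the kernel, matching exactly the order in which the kernel multiplies $h_t$ in Definition \ref{def 1D WDOL}; interchanging these factors would spoil the identity. Second, since $g^*(0)$ enters on the right of $f(\xi)$ in the product $f(\xi)g^*(0)$, the cancellation that frees $f(\xi)$ requires multiplying by $1/g^*(0)$ on the appropriate side, and its legitimacy rests on the \emph{alternative} law of octonions: the subalgebra generated by the two elements $f(\xi)$ and $g^*(0)$ is associative, so $\big(f(\xi)g^*(0)\big)(g^*(0))^{-1}=f(\xi)\big(g^*(0)(g^*(0))^{-1}\big)=f(\xi)$ despite the global failure of associativity. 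This structural fact is the point I would emphasize most carefully, as it is precisely what keeps the final division well defined.
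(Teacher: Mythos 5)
Your proposal is correct and shares the paper's skeleton: identify $\mathbb W_{f,g}^{\tau_4}(t,w)$ with $\mathcal L^{\Lambda}_{\tau_4}[h_t](w)$ via (\ref{1d WDOL in h}), invert to recover the correlation $h_t$, then specialize $t=\xi/2$, $x=\xi$ and divide by $g^*(0)$. The genuine difference lies in how the inversion is executed. You invoke the 1D-OLCT inversion formula (\ref{eqn inv 1dolct}) directly, whereas the paper detours through the Fourier side: it rewrites the WDOL via Lemma \ref{lem rel 1doft-1dolct} as a 1D-OFT of the chirped correlation $H_t(x)=h_t(x)e^{\tau_4\frac{a}{2b}x^2}$, applies the OFT inversion (\ref{eqn inv 1doft}), and reassembles the phase factors into the kernel $K_{\Lambda}^{-\tau_4}(x,w)$ --- in effect re-deriving (\ref{eqn inv 1dolct}) for $h_t$ on the fly. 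Your route is shorter and sidesteps the paper's phase bookkeeping (where, incidentally, a $\frac{\pi}{2}$ appears in the displayed exponent where (\ref{eqn rel 1doft-1dolct}) has $\frac{\pi}{4}$, and octonion-valued $h_t$ is implicitly commuted past $e^{\tau_4(\cdot)}$ factors --- exactly the ordering hazards you flag); the paper's route makes the mechanism explicit at the OFT level, which is useful since the OLCT inversion was only quoted, not proved, in Definition \ref{def 1dolct}. Two further remarks. First, your justification of the final division by alternativity, $\bigl(f(\xi)g^*(0)\bigr)\bigl(g^*(0)\bigr)^{-1}=f(\xi)$, is a point the paper passes over silently and is worth stating, particularly because the theorem writes the scalar $\frac{1}{g^*(0)}$ on the left, which in a noncommutative algebra must be read as right-division, as you do. Second, a small slip: for fixed $t$ the correlation $h_t$ inherits membership in $L^1(\mathbb R,\mathbb O)$ from $f,g\in L^2$ by the Cauchy--Schwarz inequality, not square-integrability; this is harmless here, since $L^1$ (together with the paper's standing integrability assumptions) is what the inversion formula actually requires.
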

\begin{proof}
From \ref{eqn rel 1doft-1dolct}, we obtain
\begin{equation}
\sqrt{2\pi |b|}\mathbb W_{f,g}^{\tau_4}(t,w)e^{-\tau_4(\frac{d}{2b}w^2-\frac{\pi}{2})}=\mathcal F_{\tau_4}\left[H_t\right]\left(\frac{w}{2\pi |b|}\right),
\end{equation}
where $H_t(x)=f\left(t+\frac{x}{2}\right)g^*\left(t-\frac{x}{2}\right)e^{\tau_4\frac{a}{2b}x^2}.$\\
Now by the application of (\ref{eqn inv 1doft}), it follows that
\begin{eqnarray*}
&&f\left(t+\frac{x}{2}\right)g\left(t-\frac{x}{2}\right)e^{\tau_4\frac{a}{2b}x^2}\\
&&=\int_{\mathbb R}\sqrt{2\pi |b|}\mathbb W_{f,g}^{\tau_4}(t,w)e^{-\tau_4(\frac{d}{2b}w^2-\frac{\pi}{2})}e^{\tau_4x\frac{w}{b}} d\left(\frac{u}{2\pi |b|}\right)\\
\end{eqnarray*}
Which implies 
\begin{eqnarray}
\nonumber&&f\left(t+\frac{x}{2}\right)g^*\left(t-\frac{x}{2}\right)\\
\nonumber&&=\int_{\mathbb R}\mathbb W_{f,g}^{\tau_4}(t,w)\frac{1}{\sqrt{2\pi |b|}}e^{-\tau_4(\frac{d}{2b}w^2-\frac{\pi}{2})}e^{\tau_42 \pi x\frac{w}{2\pi b}}e^{-\tau_4\frac{a}{2b}x^2}dw\\
\label{r1}&&=\int_{\mathbb R}\mathbb W_{f,g}^{\tau_4}(t,w)K_{\Lambda}^{-\tau_4}(x,w)dw.
\end{eqnarray}
On setting $\frac{x}{2}=t,$ and applying change of variable $\xi=2t$, (\ref{r1}) yields
\begin{eqnarray*}
f\left(\xi\right)g^*\left(0\right)&=&\int_{\mathbb R}\mathbb W_{f,g}^{\tau_4}\left(\frac{\xi}{2},w\right)K_{\Lambda}^{-\tau_4}(\xi,w)dw,
\end{eqnarray*}
which completes the proof
\end{proof}

\begin{theorem}[Plancherel's Theorem] Let $f,g\in L^2(\mathbb R,\mathbb O),$ then the 1D-WDOL satisfies:
\begin{eqnarray}
\|\mathbb W^{\tau_4}_{f,g}\|^2_{L^2(\mathbb R,\mathbb O)}&=&
\|f\|^2_{L^2(\mathbb R,\mathbb O)}\|g\|^2_{L^2(\mathbb R, \mathbb O)}.
\end{eqnarray}
\end{theorem}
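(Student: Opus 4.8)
The plan is to reduce the $L^{2}$ norm of the WDOL to that of the OFT through the relation \eqref{eqn rel 1doft-1dolct}, and then to exploit the multiplicativity of the octonion norm together with a measure-preserving change of variables. I would begin from
\[
\|\mathbb{W}^{\tau_4}_{f,g}\|^2_{L^2(\mathbb{R},\mathbb{O})} = \int_{\mathbb{R}}\int_{\mathbb{R}} \left|\mathbb{W}^{\tau_4}_{f,g}(t,w)\right|^2 dw\, dt,
\]
and substitute \eqref{eqn rel 1doft-1dolct}. Since $\tau_4$ is an imaginary unit, the factor $e^{\tau_4(\frac{d}{2b}w^2-\frac{\pi}{4})}$ is unimodular, so using $|o_1 o_2|=|o_1||o_2|$ (which holds unconditionally despite the non-associativity of $\mathbb{O}$) I obtain $|\mathbb{W}^{\tau_4}_{f,g}(t,w)|^2 = \frac{1}{2\pi|b|}\,|\mathcal{F}_{\tau_4}[H_t](\tfrac{w}{2\pi|b|})|^2$, where $H_t(x)=f(t+\tfrac{x}{2})g^*(t-\tfrac{x}{2})e^{\tau_4\frac{a}{2b}x^2}$.

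Next, for each fixed $t$ I would integrate in $w$. The substitution $\eta = w/(2\pi|b|)$ produces a factor $2\pi|b|$ that cancels the prefactor $1/(2\pi|b|)$ exactly, leaving $\int_{\mathbb{R}}|\mathcal{F}_{\tau_4}[H_t](\eta)|^2\,d\eta$. Applying the Parseval/Plancherel identity for the 1D-OFT \eqref{eqn 1doft} (isometric thanks to the unimodular kernel $e^{-\tau_4 2\pi x w}$ and the $2\pi$ normalization) converts this to $\int_{\mathbb{R}}|H_t(x)|^2\,dx$. Because the chirp $e^{\tau_4\frac{a}{2b}x^2}$ is unimodular, $|H_t(x)|=|h_t(x)|$, and multiplicativity of the norm gives $|h_t(x)|^2 = |f(t+\tfrac{x}{2})|^2\,|g(t-\tfrac{x}{2})|^2$.

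Finally I would perform the change of variables $(u,v)=(t+\tfrac{x}{2},\,t-\tfrac{x}{2})$, whose Jacobian has absolute value $1$, to separate the double integral:
\[
\int_{\mathbb{R}}\int_{\mathbb{R}} \left|f\left(t+\tfrac{x}{2}\right)\right|^2\left|g\left(t-\tfrac{x}{2}\right)\right|^2 dx\, dt = \int_{\mathbb{R}}|f(u)|^2 du \int_{\mathbb{R}}|g(v)|^2 dv = \|f\|^2_{L^2(\mathbb{R},\mathbb{O})}\,\|g\|^2_{L^2(\mathbb{R},\mathbb{O})},
\]
which is the claimed identity.

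The main obstacle I anticipate is justifying the OFT Parseval identity in the non-associative octonion setting: one must confirm that $\int|\mathcal{F}_{\tau_4}[H_t]|^2$ genuinely recovers $\|H_t\|^2_{L^2}$ in spite of the order-of-multiplication ambiguities of $\mathbb{O}$. Because every step after invoking \eqref{eqn rel 1doft-1dolct} uses only the modulus, where multiplicativity is unconditional, this effectively reduces to the scalar Plancherel theorem applied to the components, but I would state it with care. A secondary technical point is verifying that Fubini's theorem applies so that the $w$- and $t$-integrations may be interchanged; this is legitimate once $f,g\in L^2(\mathbb{R},\mathbb{O})$ is used to ensure the instantaneous autocorrelation $h_t$ lies in $L^2$ for almost every $t$.
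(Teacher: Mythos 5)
Your proof is correct, but it takes a genuinely different route from the paper's. The paper first establishes unitarity of the 1D-OLCT itself: it expands $\langle f,g\rangle_{L^2(\mathbb R,\mathbb O)}$ using the inversion formula (\ref{eqn inv 1dolct}), interchanges the order of integration, and identifies the result as $\langle\mathcal L^\Lambda_{\tau_4}[f],\mathcal L^\Lambda_{\tau_4}[g]\rangle$; setting $f=g$ gives $\|\mathcal L^\Lambda_{\tau_4}[f]\|_2=\|f\|_2$, after which it substitutes $h_t$ via (\ref{1d WDOL in h}) and finishes exactly as you do, with the Jacobian-one change of variables $(u,v)=(t+\tfrac{x}{2},\,t-\tfrac{x}{2})$. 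You instead bypass the OLCT isometry entirely by invoking the factorization (\ref{eqn rel 1doft-1dolct}) through the 1D-OFT, stripping the unimodular chirp factors via $|o_1o_2|=|o_1||o_2|$, rescaling $\eta=w/(2\pi|b|)$, and then applying the OFT Plancherel identity. Your route buys a real robustness advantage in this non-associative setting: after the factorization you only ever manipulate moduli, where multiplicativity of the octonion norm is unconditional, whereas the paper's inner-product computation silently re-associates and conjugates triple products of octonion-valued factors, steps which are delicate in $\mathbb O$ and which you correctly avoid. What the paper's approach buys in exchange is self-containedness (it derives the needed isometry from the stated inversion formula rather than importing the OFT Parseval theorem from \cite{1doft}) and a reusable intermediate fact, since the same OLCT isometry is what the 3D Rayleigh theorem (Theorem \ref{plan 3D WDOL}) later rests on. Your two flagged technical points are handled correctly: the OFT Parseval does reduce to the scalar Plancherel theorem componentwise, and $h_t\in L^1\cap L^2$ for a.e.\ $t$ follows from Cauchy--Schwarz and the finiteness of $\int\int|h_t(x)|^2\,dx\,dt=\|f\|_2^2\|g\|_2^2$, so Fubini applies.
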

\begin{proof}We know that for $f,g\in L^2(\mathbb R,\mathbb O),$ we have
\begin{eqnarray*}
\langle f,g\rangle_{L^2(\mathbb R,\mathbb O)}&=&\int_{\mathbb R}f(x)g^*(x)dx\\
&=&\int_{\mathbb R}\left(\int_{\mathbb R}\mathcal L^\Lambda_{\tau_4}[f](w)K_{\Lambda}^{-\tau_4}(x,w)dw\right)g^*(x)dx\\
&=&\int_{\mathbb R}\mathcal L^\Lambda_{\tau_4}[f](w)\left(\int_{\mathbb R}g(x)K_{\Lambda}^{\tau_4}(x,w)dx\right)^*dw\\
&=&\int_{\mathbb R}\mathcal L^\Lambda_{\tau_4}[f](w)\left(\mathcal L^\Lambda_{\tau_4}[g](w)\right)^*dw\\
&=&\langle\mathcal L^\Lambda_{\tau_4}[f](w),\mathcal L^\Lambda_{\tau_4}[g](w)\rangle_{L^2(\mathbb R,\mathbb O)}
\end{eqnarray*}
Thus for $f=g$, above yields
\begin{equation*}
\|f\|^2_{L^2(\mathbb R,\mathbb O)}=\|\mathcal L^\Lambda_{\tau_4}[f]\|^2_{L^2(\mathbb R,\mathbb O)}
\end{equation*}
Replacing $f(x)$ by $h_t(x)$, we have
\begin{equation*}
\|h_t\|^2_{L^2(\mathbb R,\mathbb O)}=\|\mathcal L^\Lambda_{\tau_4}[h_t]\|^2_{L^2(\mathbb R,\mathbb O)}
\end{equation*}
Now applying (\ref{1d WDOL in h}), above eqn. yields
\begin{eqnarray*}
\|\mathbb W^{\tau_4}_{f,g}\|^2_{L^2(\mathbb R,\mathbb O)}&=&\left\|f\left(t+\frac{x}{2}\right)g^*\left(t-\frac{x}{2}\right)\right\|^2_{L^2(\mathbb R,\mathbb O)}\\
&=&\left(\int_{\mathbb R}\int_{\mathbb R}\left|f\left(t+\frac{x}{2}\right)g^*\left(t-\frac{x}{2}\right)\right|^2dtds\right)\\
&=&
\int_{\mathbb R}\int_{\mathbb R}\left|f\left(u\right)g^*\left(v\right)\right|^2dudv
\\
&=&
\int_{\mathbb R}\left|f\left(u\right)\right|^2du\int_{\mathbb R}\left|g^*\left(v\right)\right|^2dv
\\
&=&\|f\|^2_{L^2(\mathbb R,\mathbb O)}\|g\|^2_{L^2(\mathbb R, \mathbb O)}.
\end{eqnarray*}
Which completes the proof.
\end{proof}
By using the relationship between 1D-WDOL and 1D-OFT, we can prove the properties like $\mathbb R-$antilinearity, Scaling, Shift, Modulation of 1D-WDOLC following the procedure defined in \cite{1doft}.\\

Next, we will introduce the definition of  the 3D-WDOL.

\begin{definition}[3D-WDOL]\label{def 3d-WDOL}
Let $f,g:\mathbb R^3\rightarrow \mathbb O$ be two octonion-valued functions, then 3D-WDOL with respect to the   matrix parameters $\Lambda_k=(a_k,b_k,c_k,d_k), $ satisfying $det(\Lambda_k)=1,\quad k=1,2,3$ is defined as
\begin{equation}\label{eqn 3d WDOL}
\mathbb W ^{\Lambda_1,\Lambda_2,\Lambda_3}_{f,g}({\bf t, w})=\int_{\mathbb R^3}f\left(\bf{t+\frac{x}{2}}\right) g^*\left({\bf t-\frac{x}{2}}\right) K^{\tau_1}_{\Lambda_1}(x_1,w_1)K^{\tau_2}_{\Lambda_2}(x_2,w_2)K^{\tau_4}_{\Lambda_3}(x_3,w_3)d{\bf x}.
\end{equation}

where ${\bf x}=(x_{1},x_{2},x_3),\, {\bf w}=(w_{1},w_{2},w_3),\,{\bf t}=(t_1,t_2,t_3)$  and  kernel signals $K^{\tau_1}_{\Lambda_1}(x_1,w_1),$ $K^{\tau_2}_{\Lambda_2}(x_2,w_2),$ and $K^{\tau_4}_{\Lambda_3}(x_3,w_3)$ are given by (\ref{k1}),(\ref{k2}) and (\ref{k3}) respectively.

\end{definition}
It should be noted that the
multiplication in the above integrals is done from left to right 
as the octonion algebra is non-associative. Also we assume that the above signals $f,g$ are continuous and both signals and there  WDOL are integrable(in Lebesgue sense) in this paper.\\

Next we note that 3D-WDOL is an 3D-OLCT of a instantaneous correlation of 3D octonion-valued functions $f$ and $g$ by means of remark as:
\begin{remark}From definition of 3D-OLCT, it is clear that if we take \begin{equation}\label{fun 3d h}
h_{\bf t}({\bf x})=f\left(\bf{t+\frac{x}{2}}\right) g^*\left({\bf t-\frac{x}{2}}\right) ,
\end{equation}
we have
\begin{equation}\label{3d WDOL in h}
\mathbb W ^{\Lambda_1,\Lambda_2,\Lambda_3}_{f,g}({\bf t, w})=\mathcal L^{\Lambda_1,\Lambda_2,\Lambda_3}_{\tau_1,\tau_2,\tau_4}[h_{\bf t}] ({\bf w}).\\
\end{equation}
\end{remark}

The 3D-WDOL $\mathbb W ^{\Lambda_1,\Lambda_2,\Lambda_3}_{f,g}({\bf t, w})$ defined in (\ref{eqn 3d WDOL}) can be expressed 
 as octonion sum of components of different parity by using (\ref{eulerproduct}) as:

\begin{eqnarray}\label{parity}
\nonumber\mathbb W^{\Lambda_1,\Lambda_2,\Lambda_3}_{f,g}({\bf t, w})&=&W^{eee}_{f,g}+W^{oee}_{f,g}\tau_1+W^{eoe}_{f,g}\tau_2+W^{ooe}_{f,g}\tau_3+G^{eeo}_{f,g}\tau_4\\
\nonumber&&\qquad\qquad\qquad+W^{oeo}_{f,g}\tau_5+W^{eoo}_{f,g}\tau_6+W^{ooo}_{f,g}\tau_7\\\
\end{eqnarray}
where
\begin{equation}\label{e1}
W^{eee}_{f,g}({\bf t, w})=\frac{1}{2\pi\sqrt{2\pi|b_1b_2b_3|}}\int_{\mathbb R^3}h_{\bf t}^{eee}({\bf x})c_1c_2c_3d{\bf x},
\end{equation}
\begin{equation}\label{e2}
W^{oee}_{f,g}({\bf t, w}))=\frac{1}{2\pi\sqrt{2\pi|b_1b_2b_3|}}\int_{\mathbb R^3}h_{oee}({\bf x})s_1c_2c_3d{\bf x},
\end{equation}

\begin{equation}\label{e3}
W^{eoe}_{f,g}({\bf t, w}))=\frac{1}{2\pi\sqrt{2\pi|b_1b_2b_3|}}\int_{\mathbb R^3}h_{\bf t}^{eoe}({\bf x})c_1s_2c_3d{\bf x},
\end{equation}

\begin{equation}\label{e4}
W^{ooe}_{f,g}({\bf t, w}))=\frac{1}{2\pi\sqrt{2\pi|b_1b_2b_3|}}\int_{\mathbb R^3}h_{\bf t}^{ooe}({\bf x})s_1s_2c_3d{\bf x},
\end{equation}

\begin{equation}\label{e5}
W^{eeo}_{f,g}({\bf t, w}))=\frac{1}{2\pi\sqrt{2\pi|b_1b_2b_3|}}\int_{\mathbb R^3}h_{\bf t}^{eeo}({\bf x})c_1c_2s_3d{\bf x},
\end{equation}

\begin{equation}\label{e6}
W^{oeo}_{f,g}({\bf t, w}))=\frac{1}{2\pi\sqrt{2\pi|b_1b_2b_3|}}\int_{\mathbb R^3}h_{\bf t}^{oeo}({\bf x})s_1c_2s_3d{\bf x},
\end{equation}

\begin{equation}\label{e7}
W^{eoo}_{f,g}({\bf t, w}))=\frac{1}{2\pi\sqrt{2\pi|b_1b_2b_3|}}\int_{\mathbb R^3}h_{\bf t}^{eoo}({\bf x})c_1c_2s_3d{\bf x},
\end{equation}

\begin{equation}\label{e8}
W^{ooo}_{f,g}({\bf t, w}))=\frac{1}{2\pi\sqrt{2\pi|b_1b_2b_3|}}\int_{\mathbb R^3}h_{\bf t}^{ooo}({\bf x})s_1s_2s_3d{\bf x}.
\end{equation}

 Where $h_{\bf t}({\bf x})=f\left(\bf{t+\frac{x}{2}}\right) g^*\left({\bf t-\frac{x}{2}}\right)$  and can be expressed as  sum  eight terms:
 \begin{eqnarray}\label{h even odd}
 \nonumber h_{\bf t}({\bf x})&=&h_{\bf t}^{eee}({\bf x})+h_{\bf t}^{eeo}({\bf x})+h_{\bf t}^{eoe}({\bf x})+h_{\bf t}^{eoo}({\bf x})\\\
 \nonumber&&+h_{\bf t}^{oee}({\bf x})+h_{\bf t}^{oeo}({\bf x})+h_{\bf t}{ooe}({\bf x})+h_{\bf t}^{ooo}({\bf x}),\\\
 \end{eqnarray}
where $h_{\bf t}^{lmn}({\bf x}),l,m,n\in\{e,o\}$ are eight terms of different parity with relation to $x_1,
x_2$ and $x_3.$ Again using subscripts $e$ and $o$ to indicate that a
function is either even (e) or odd (o) with respect to an appropriate
variable, i.e. $h_{\bf t}^{eeo}({\bf x})$ is even with respect to $x_1$ and $x_2$ and odd with
respect to $x_3$.\\

Now, we show that 3D-WDOL can be divided into four Wigner distributions in the  QLCT domain (WDQLCT).
\begin{lemma}[Relation with WDQLCT]\label{rel with WDQLCT}For $f,g\in L^2(\mathbb R^3, \mathbb O)$  and $\mathbb W ^{\Lambda_1,\Lambda_2,\Lambda_3}_{f,g}({\bf t, w})$ be the 3D-WDOL, then
\begin{eqnarray}
\nonumber\left|\mathbb W ^{\Lambda_1,\Lambda_2,\Lambda_3}_{f,g}({\bf t, w})\right|^2
&=&\frac{1}{2\pi b_3}\left(\left|\mathbb W^{\Lambda_1,\Lambda_2}_{(\tilde f,\tilde g)^e} ({\bf t,w})\right|^2+\left|\mathbb W^{\Lambda_1,\Lambda_2}_{(\hat f,\hat g)^o} ({\bf t, w})\right|^2\right.\\
\label{eqn rel WDQLCT}&&+\left.\left|\mathbb W^{\Lambda_1,\Lambda_2}_{(\hat f,\hat g)^e} ({\bf t, w})\right|^2+\left|\mathbb W^{\Lambda_1,\Lambda_2}_{(\tilde f,\tilde g)^o} ({\bf t, w})\right|^2\right),
\end{eqnarray}
where $\tilde f,\, \hat f,\, \tilde g,\,\hat g\in L^2(\mathbb R^2,\mathbb H).$
\end{lemma}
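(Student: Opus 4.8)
The plan is to reduce the octonion problem to a quaternionic one by exploiting the Cayley--Dickson splitting $o=a+b\tau_4$ ($a,b\in\mathbb H$) together with the norm identity $|a+b\tau_4|^2=|a|^2+|b|^2$ from (\ref{o6}), and to factor the defining triple integral (\ref{eqn 3d WDOL}) into an inner $(x_1,x_2)$-integration carrying the quaternionic kernels $K^{\tau_1}_{\Lambda_1},K^{\tau_2}_{\Lambda_2}$ and an outer $x_3$-integration carrying the $\tau_4$-kernel $K^{\tau_4}_{\Lambda_3}$. Since $\tau_1,\tau_2$ (with $\tau_1\tau_2=\tau_3$) generate a copy of $\mathbb H$, the inner integral is genuinely quaternion-valued and will reassemble into two-dimensional WDQLCT objects, while the entire nonassociative difficulty is localized in how the single unit $\tau_4$ interacts with it.

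First I would write $f=\tilde f+\hat f\tau_4$ and $g=\tilde g+\hat g\tau_4$, compute $g^*=\overline{\tilde g}-\hat g\tau_4$ from (\ref{o5}), and expand the instantaneous correlation $h_{\bf t}$ of (\ref{fun 3d h}) using the multiplication rules of Lemma \ref{lem1}, collecting it as $h_{\bf t}=A_{\bf t}+B_{\bf t}\tau_4$ with $A_{\bf t},B_{\bf t}\in\mathbb H$ built from the components $\tilde f,\hat f,\tilde g,\hat g$. Then I would expand the outer kernel by Euler's formula exactly as in (\ref{eulerproduct}), $K^{\tau_4}_{\Lambda_3}=\tfrac{1}{\sqrt{2\pi|b_3|}}(\cos\theta_3+\tau_4\sin\theta_3)$, and carry out the right-multiplication of $A_{\bf t}Q+(B_{\bf t}\overline Q)\tau_4$, where $Q=K^{\tau_1}_{\Lambda_1}K^{\tau_2}_{\Lambda_2}\in\mathbb H$, by $\cos\theta_3+\tau_4\sin\theta_3$, once more via Lemma \ref{lem1}, to read off the quaternion and $\tau_4$ parts of the full integrand in the fixed left-to-right order.

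The decisive step is the parity argument in $x_3$. Because $\theta_3$ contains the linear term $-2x_3w_3$, the factors $\cos\theta_3,\sin\theta_3$ are not themselves even or odd, so I would first pass to the Fourier picture via Lemma \ref{lem rel 1doft-1dolct} applied in the $x_3$ slot, which replaces $K^{\tau_4}_{\Lambda_3}$ by the OFT kernel acting on the chirp-modulated correlation $h_{\bf t}\,e^{\tau_4\frac{a_3}{2b_3}x_3^2}$. The chirp $e^{\tau_4\frac{a_3}{2b_3}x_3^2}$ is even in $x_3$ and hence preserves parity, while the OFT kernel splits into an even cosine and an odd sine. Decomposing the $x_3$-correlation into even and odd parts as in (\ref{h even odd}) and integrating over the symmetric line, the even part survives only against the cosine and the odd part only against the sine, every mixed product being odd and therefore integrating to zero; this is the mechanism that routes the four data $(\tilde f,\tilde g)^e,(\tilde f,\tilde g)^o,(\hat f,\hat g)^e,(\hat f,\hat g)^o$ into four separate surviving integrals, each of which I would identify with a WDQLCT $\mathbb W^{\Lambda_1,\Lambda_2}_{(\cdot,\cdot)^{e/o}}({\bf t,w})$.

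Finally I would apply the norm identity (\ref{o6}) to the assembled transform: writing $\mathbb W^{\Lambda_1,\Lambda_2,\Lambda_3}_{f,g}=P+R\tau_4$ with $P,R$ quaternion-valued gives $|\mathbb W|^2=|P|^2+|R|^2$, and the parity selection forces $P$ and $R$ to be built from orthogonal even and odd blocks, so that $|P|^2$ and $|R|^2$ each split once more into two of the stated squared WDQLCT magnitudes; the prefactor $\frac{1}{2\pi b_3}$ is the square of the $x_3$-kernel normalization $\tfrac{1}{\sqrt{2\pi|b_3|}}$, the unimodular phase produced by Lemma \ref{lem rel 1doft-1dolct} dropping out under $|\cdot|$ since $|o_1o_2|=|o_1||o_2|$. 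I expect the main obstacle to be twofold: keeping the nonassociative products honest, so that every multiplication is reduced by Lemma \ref{lem1} in the prescribed order and never by associativity, and verifying that all cross terms genuinely vanish and that the surviving blocks match the precise WDQLCT objects of the statement. The latter is the true engine of the four-term collapse, and it rests entirely on the evenness of the chirp in $x_3$ together with the orthogonality of even and odd functions on $\mathbb R$.
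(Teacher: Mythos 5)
Your proposal follows the same architecture as the paper's proof: split $h_{\bf t}=\tilde h_{\bf t}+\hat h_{\bf t}\tau_4$ via the Cayley--Dickson form, push $\tau_4$ through the $\tau_1,\tau_2$ kernels (conjugating them) by the multiplication rules of Lemma \ref{lem1}, expand the third kernel by Euler's formula as in \eqref{eulerproduct}, sort by parity in $x_3$, and finish with $|a+b\tau_4|^2=|a|^2+|b|^2$ from \eqref{o6}. You do add one genuine refinement: you observe that $\theta_3$ contains the odd linear term $-2x_3w_3$ alongside even terms, so $c_3=\cos\theta_3$ and $s_3=\sin\theta_3$ have no definite parity in $x_3$, and you repair this by first invoking the chirp reduction of Lemma \ref{lem rel 1doft-1dolct}. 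The paper skips this entirely: in passing from \eqref{a1} to the next display it pairs $\tilde h^e_{\bf t}$ with $c_3$ and $\hat h^o_{\bf t}$ with $s_3$ outright, which is not literally a parity selection; your chirp step (even chirp preserves parity, the OFT kernel splits into even cosine and odd sine) is exactly what would make that display honest.

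However, your final step contains a genuine gap --- the same one the paper's proof commits silently. You claim that since $P$ and $R$ are ``built from orthogonal even and odd blocks,'' each of $|P|^2$ and $|R|^2$ splits once more into two squared WDQLCT magnitudes. Parity orthogonality acts \emph{inside} a single $x_3$-integral (an even-times-odd integrand integrates to zero); it says nothing about the two contributions that \emph{survive} in the same Cayley--Dickson slot. Concretely, the quaternion part of the transform is a sum $q_1+q_2$, with $q_1$ coming from $(\tilde f,\tilde g)^e$ against the even kernel part and $q_2$ from $(\hat f,\hat g)^o$ against the odd part; after integration $q_1,q_2$ are just two quaternions, and $|q_1+q_2|^2=|q_1|^2+|q_2|^2+2\langle q_1,q_2\rangle$ with no mechanism forcing the cross term to vanish. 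A one-dimensional model already shows this: for $h=u+v\tau_4$ with $u$ real even and $v$ real odd, one computes $\mathcal F_{\tau_4}[h](w)=\int u\cos(2\pi xw)\,dx+\int v\sin(2\pi xw)\,dx$, whose square is not the sum of the two squares. The paper drops exactly these cross terms without comment in passing to \eqref{a3}, so you have reproduced its proof including its weak point; the exact identities of this type in the literature are typically obtained by symmetrizing over mirrored frequencies $(w_1,w_2,\pm w_3)$, where the cross terms cancel by the parallelogram law, and neither your write-up nor the paper supplies such a step.
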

\begin{proof}
Let $h_{\bf t}^e({\bf x})=\frac{1}{2}[h_{\bf t}(x_1,x_2,x_3)+h_{\bf t}(x_1,x_2,-x_3)],$  and $h_{\bf t}^o({\bf x})=\frac{1}{2}[h_{\bf t}(x_1,x_2,x_3)-h_{\bf t}(x_1,x_2,-x_3)],$  then $h_{\bf t}^e({\bf x})$ and $h_{\bf t}^o({\bf x})$ represents   even and odd parts of $h_{\bf t}({\bf x})$ but only in variable $x_3.$\\
 Since every octonion function can be written in the quaternion form as  $h_{\bf t}({\bf x})=\tilde h_{\bf t}+\hat {h_{\bf t}}\tau_4$, therefore from (\ref{3d WDOL in h}), we have
 \begin{eqnarray}
\nonumber\mathbb W ^{\Lambda_1,\Lambda_2,\Lambda_3}_{f,g}({\bf t, w})\\
\nonumber&=&\mathcal L^{\Lambda_1,\Lambda_2,\Lambda_3}_{\tau_1,\tau_2,\tau_4}[\tilde h_{\bf t}+\hat {h_{\bf t}}\tau_4] ({\bf w})\\
\nonumber&=&\int_{\mathbb R^3}(\tilde h_{\bf t}+\hat {h_{\bf t}}\tau_4) K^{\tau_1}_{\Lambda_1}(x_1,w_1)K^{\tau_2}_{\Lambda_2}(x_2,w_2)K^{\tau_4}_{\Lambda_3}(x_3,w_3)d{\bf x}\\
\nonumber&=&\int_{\mathbb R^3}\tilde h_{\bf t}({\bf x}) K^{\tau_1}_{\Lambda_1}(x_1,w_1)K^{\tau_2}_{\Lambda_2}(x_2,w_2)K^{\tau_4}_{\Lambda_3}(x_3,w_3)d{\bf x}\\
\label{a1}&&+\int_{\mathbb R^3}\hat {h_{\bf t}}({\bf x}) K^{-\tau_1}_{\Lambda_1}(x_1,w_1)K^{-\tau_2}_{\Lambda_2}(x_2,w_2)\tau_4K^{\tau_4}_{\Lambda_3}(x_3,w_3)d{\bf x}.
\end{eqnarray}
Now taking even and odd parts of functions  $\tilde h_{\bf t}$ and $\hat {h_{\bf t}},$ (\ref{a1}) yields
\begin{eqnarray}
\nonumber\mathbb W ^{\Lambda_1,\Lambda_2,\Lambda_3}_{f,g}({\bf t, w})\\
\nonumber&=&\frac{1}{\sqrt{2\pi b_3}}\int_{\mathbb R^3}\tilde h^e_{\bf t}({\bf x}) K^{\tau_1}_{\Lambda_1}(x_1,w_1)K^{\tau_2}_{\Lambda_2}(x_2,w_2)c_3d{\bf x}\\
\nonumber&&+\frac{1}{\sqrt{2\pi b_3}}\int_{\mathbb R^3}\hat {h^o_{\bf t}}({\bf x}) K^{-\tau_1}_{\Lambda_1}(x_1,w_1)K^{-\tau_2}_{\Lambda_2}(x_2,w_2)s_3d{\bf x}\\
\nonumber &&+\left(\frac{1}{\sqrt{2\pi b_3}}\int_{\mathbb R^3}\hat {h^e_{\bf t}}({\bf x}) K^{-\tau_1}_{\Lambda_1}(x_1,w_1)K^{-\tau_2}_{\Lambda_2}(x_2,w_2)c_3d{\bf x}\right.\\
\nonumber&&\left.-\frac{1}{\sqrt{2\pi b_3}}\int_{\mathbb R^3} \tilde h^o_{\bf t}({\bf x}) K^{\tau_1}_{\Lambda_1}(x_1,w_1)K^{\tau_2}_{\Lambda_2}(x_2,w_2)s_3d{\bf x}\right)\tau_4,\\
\end{eqnarray}
where $\theta_k={\frac{{1}}{2b_k}\big[a_kx^2_k-2x_kw_k+d_kw^2_k-\frac{\pi}{2}\big]},\quad c_k=\cos\theta_k$ and $s_k=\sin\theta_k,\quad k=1,2,3.$\\
Thus
\begin{eqnarray}
\nonumber\left|\mathbb W ^{\Lambda_1,\Lambda_2,\Lambda_3}_{f,g}({\bf t, w})\right|^2\\
\nonumber&=&\frac{1}{2\pi b_3}\left(\left|\mathcal L^{\Lambda_1,\Lambda_2}_{\tau_1,\tau_2}[\tilde h^e_{\bf t}] ({\bf w})\right|^2+\left|\mathcal L^{\Lambda_1,\Lambda_2}_{\tau_1,\tau_2}[\hat h^0_{\bf t}] ({\bf w})\right|^2\right.\\
\label{a3}&&+\left.\left|\mathcal L^{\Lambda_1,\Lambda_2}_{\tau_1,\tau_2}[\hat h^e_{\bf t}] ({\bf w})\right|^2+\left|\mathcal L^{\Lambda_1,\Lambda_2}_{\tau_1,\tau_2}[\tilde h^o_{\bf t}] ({\bf w})\right|^2\right),
\end{eqnarray}
where the terms in RHS denote the quaternion LCT of the respective  correlation
product.\\
Applying the definition of WD in the QLCT domain, (\ref{a3}) gives
\begin{eqnarray*}
\nonumber\left|\mathbb W ^{\Lambda_1,\Lambda_2,\Lambda_3}_{f,g}({\bf t, w})\right|^2\\
\nonumber&=&\frac{1}{2\pi b_3}\left(\left|\mathbb W^{\Lambda_1,\Lambda_2}_{(\tilde f,\tilde g)^e} ({\bf t,w})\right|^2+\left|\mathbb W^{\Lambda_1,\Lambda_2}_{(\hat f,\hat g)^o} ({\bf t, w})\right|^2\right.\\
&&+\left.\left|\mathbb W^{\Lambda_1,\Lambda_2}_{(\hat f,\hat g)^e} ({\bf t, w})\right|^2+\left|\mathbb W^{\Lambda_1,\Lambda_2}_{(\tilde f,\tilde g)^o} ({\bf t, w})\right|^2\right).
\end{eqnarray*}
Which completes the proof.
\end{proof}

\begin{theorem}[Reconstruction formula of the 3D-WDOL]If $f,g\in L^2(\mathbb R^3,\mathbb O)$ and $g(0)\ne0,$ then $f$ can be reconstructed by the inverse 3D-OLCT of 3D-WDOL $\mathbb W^{\Lambda_1,\Lambda_2,\Lambda_3}_{f,g},$ i.e.
\begin{eqnarray*}
f\left(\bf{x}\right) 
&=&\frac{1}{g^*\left(0\right)}\int_{\mathbb R^3}\mathbb W^{\Lambda_1,\Lambda_2,\Lambda_3}_{f,g}\left({\bf \frac{x}{2},w}\right)\overline{K_{\Lambda_3}^{\tau_4}(w_3,x_3)}.\overline{K_{\Lambda_2}^{\tau_2}(w_2,x_2)}.\overline{K_{\Lambda_1}^{\tau_1}(w_1,x_1)}d{\bf w}.\\
\end{eqnarray*}
\end{theorem}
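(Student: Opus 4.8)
The plan is to mirror the one-dimensional reconstruction argument, exploiting the fact recorded in the remark at (\ref{3d WDOL in h}) that the 3D-WDOL is nothing but the 3D-OLCT of the instantaneous correlation
$$h_{\bf t}({\bf x})=f\left({\bf t+\frac{x}{2}}\right)g^*\left({\bf t-\frac{x}{2}}\right),\qquad \mathbb W^{\Lambda_1,\Lambda_2,\Lambda_3}_{f,g}({\bf t,w})=\mathcal L^{\Lambda_1,\Lambda_2,\Lambda_3}_{\tau_1,\tau_2,\tau_4}[h_{\bf t}]({\bf w}).$$
Since $f,g\in L^2(\mathbb R^3,\mathbb O)$ and both the correlation and its transform are assumed integrable, the 3D-OLCT inversion formula (\ref{eqn inv 3dolct}) applies to $h_{\bf t}$ for each fixed center ${\bf t}$. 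Applying it recovers $h_{\bf t}({\bf x})$ as the integral of $\mathbb W^{\Lambda_1,\Lambda_2,\Lambda_3}_{f,g}({\bf t,w})$ against the reversed product of inverse kernels $K_{\Lambda_3^{-1}}^{\tau_4}(w_3,x_3)K_{\Lambda_2^{-1}}^{\tau_2}(w_2,x_2)K_{\Lambda_1^{-1}}^{\tau_1}(w_1,x_1)$.

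Next I would specialize the center. Evaluating the recovered identity at the point where the lagging shifted argument vanishes — concretely, setting the center ${\bf t}$ equal to one half of the correlation variable — forces the second factor to $g^*(0)$ and the first to $f$ of the (relabelled) free variable, giving
$$f({\bf x})\,g^*(0)=\int_{\mathbb R^3}\mathbb W^{\Lambda_1,\Lambda_2,\Lambda_3}_{f,g}\left({\bf \frac{x}{2}},{\bf w}\right)K_{\Lambda_3^{-1}}^{\tau_4}(w_3,x_3)K_{\Lambda_2^{-1}}^{\tau_2}(w_2,x_2)K_{\Lambda_1^{-1}}^{\tau_1}(w_1,x_1)\,d{\bf w}.$$
To pass to the conjugated kernels displayed in the statement I would invoke the two elementary identities $\overline{K_\Lambda^{\tau}(u,v)}=K_\Lambda^{-\tau}(u,v)$ (conjugation merely flips the sign of the single imaginary unit occurring in the exponent) together with $K_\Lambda^{-\tau}(x,w)=K_{\Lambda^{-1}}^{\tau}(w,x)$ from Definition \ref{def 1dolct}; reconciling the two presentations is purely bookkeeping. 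Finally, since $g(0)\neq 0$ the octonion $g^*(0)$ is invertible, so stripping it off by $1/g^*(0)$ yields the claimed formula.

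The main point demanding care is the non-associativity and non-commutativity of $\mathbb O$: the three exponential kernels live in the distinct imaginary directions $\tau_1,\tau_2,\tau_4$, and both the forward transform (\ref{eqn 3d WDOL}) and its inverse (\ref{eqn inv 3dolct}) fix the left-to-right multiplication order (the inverse reversing it). I must therefore apply the inversion in exactly that prescribed order and resist commuting or re-associating any factors. A secondary subtlety is the placement of the scalar $g^*(0)$: because $h_{\bf t}$ carries $g^*$ as a right factor, the honest relation is $f({\bf x})g^*(0)=(\cdots)$, so one should track the side on which $1/g^*(0)$ acts when isolating $f$. Granting the inversion formula and these algebraic conventions, the remaining manipulations are the same routine substitution used in the 1D reconstruction theorem.
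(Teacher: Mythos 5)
Your proposal follows essentially the same route as the paper's own proof: apply the 3D-OLCT inversion formula to $h_{\bf t}({\bf x})=f\left({\bf t+\frac{x}{2}}\right)g^*\left({\bf t-\frac{x}{2}}\right)$ via the identity $\mathbb W^{\Lambda_1,\Lambda_2,\Lambda_3}_{f,g}({\bf t,w})=\mathcal L^{\Lambda_1,\Lambda_2,\Lambda_3}_{\tau_1,\tau_2,\tau_4}[h_{\bf t}]({\bf w})$, set ${\bf t}={\bf x}/2$, relabel the variable, identify $K_{\Lambda_k^{-1}}^{\tau}(w_k,x_k)$ with the conjugated kernels, and strip off $g^*(0)$. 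If anything, your closing remarks on the prescribed left-to-right multiplication order and on the side from which $(g^*(0))^{-1}$ must act (the honest identity being $f({\bf x})\,g^*(0)=\int_{\mathbb R^3}\cdots\,d{\bf w}$, with right division justified by alternativity of $\mathbb O$) are more careful than the paper, which simply writes $\frac{1}{g^*(0)}$ as a left factor.
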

\begin{proof}
Applying 3D-OLCT inversion given in (\ref{eqn inv 3dolct}) to (\ref{3d WDOL in h}), we have
\begin{eqnarray*}
h_{\bf t}({\bf x})&=&\{\mathcal L^{\Lambda_1,\Lambda_2,\Lambda_3}_{\tau_1,\tau_2,\tau_4}\}^{-1}\left[\mathbb W^{\Lambda_1,\Lambda_2,\Lambda_3}_{f,g}\right]({\bf x})\\
\nonumber&=&\int_{\mathbb R^3}\mathbb W^{\Lambda_1,\Lambda_2,\Lambda_3}_{f,g}({\bf t,w})K_{\Lambda^{-1}_3}^{\tau_4}(w_3,x_3)K_{\Lambda^{-1}_2}^{\tau_2}(w_2,x_2)K_{\Lambda^{-1}_1}^{\tau_1}(w_1,x_1)d{\bf w}.\\
\end{eqnarray*}
Hence
\begin{eqnarray*}
&&f\left(\bf{t+\frac{x}{2}}\right) g^*\left({\bf t-\frac{x}{2}}\right)\\
&&=\int_{\mathbb R^3}\mathbb W^{\Lambda_1,\Lambda_2,\Lambda_3}_{f,g}({\bf t,w})\overline{K_{\Lambda_3}^{\tau_4}(w_3,x_3)}.\overline{K_{\Lambda_2}^{\tau_2}(w_2,x_2)}.\overline{K_{\Lambda_1}^{\tau_1}(w_1,x_1)}d{\bf w}.\\
\end{eqnarray*}
Setting ${\bf t=\frac{x}{2}},$ above equation yields
\begin{eqnarray*}
&&f\left(2\bf{t}\right) g^*\left(0\right)\\
&&=\int_{\mathbb R^3}\mathbb W^{\Lambda_1,\Lambda_2,\Lambda_3}_{f,g}({\bf t,w})\overline{K_{\Lambda_3}^{\tau_4}(w_3,2t_3)}.\overline{K_{\Lambda_2}^{\tau_2}(w_2,2t_2)}.\overline{K_{\Lambda_1}^{\tau_1}(w_1,2t_1)}d{\bf w}.\\
\end{eqnarray*}
Applying the change of variable ${\bf y}=2{\bf t},$ we get
\begin{eqnarray*}
f\left(\bf{y}\right)
&=&\frac{1}{g^*\left(0\right)}\int_{\mathbb R^3}\mathbb W^{\Lambda_1,\Lambda_2,\Lambda_3}_{f,g}\left({\bf \frac{y}{2},w}\right)\overline{K_{\Lambda_3}^{\tau_4}(w_3,y_3)}.\overline{K_{\Lambda_2}^{\tau_2}(w_2,y_2)}.\overline{K_{\Lambda_1}^{\tau_1}(w_1,y_1)}d{\bf w}.\\
\end{eqnarray*}
Which completes the proof.
\end{proof}

\begin{theorem}[Rayleigh’s theorem for 3D-WDOL]\label{plan 3D WDOL} Let $f,g\in L^2(\mathbb R^3,\mathbb O)$, then we have
\begin{equation}\label{eqn plan 3d vdol}
2\pi|b_3|\|\mathbb W ^{\Lambda_1,\Lambda_2,\Lambda_3}_{f,g}({\bf t, w})\|^2_{L^2(\mathbb R^2,\mathbb O)}=\|f\|^2_{L^2(\mathbb R^2,\mathbb O)}.\|g\|^2_{L^2(\mathbb R^2,\mathbb O)}.
\end{equation}
\end{theorem}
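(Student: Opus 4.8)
The plan is to read the result off the structural decomposition already recorded in Lemma \ref{rel with WDQLCT}, rather than to re-establish an isometry for the full octonion transform from scratch. I would start from the pointwise identity
\begin{equation*}
\left|\mathbb W ^{\Lambda_1,\Lambda_2,\Lambda_3}_{f,g}({\bf t, w})\right|^2=\frac{1}{2\pi b_3}\left(\left|\mathbb W^{\Lambda_1,\Lambda_2}_{(\tilde f,\tilde g)^e}\right|^2+\left|\mathbb W^{\Lambda_1,\Lambda_2}_{(\hat f,\hat g)^o}\right|^2+\left|\mathbb W^{\Lambda_1,\Lambda_2}_{(\hat f,\hat g)^e}\right|^2+\left|\mathbb W^{\Lambda_1,\Lambda_2}_{(\tilde f,\tilde g)^o}\right|^2\right)
\end{equation*}
and integrate both sides over $({\bf t},{\bf w})\in\mathbb R^3\times\mathbb R^3$. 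Since the integrand is nonnegative, Tonelli's theorem lets the integration pass through the finite sum, so the left side becomes $\|\mathbb W ^{\Lambda_1,\Lambda_2,\Lambda_3}_{f,g}\|^2$ while each term on the right becomes the squared $L^2$-norm of one of the four quaternion-valued WD-QLCT components.

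The next step is to apply Rayleigh's theorem for the Wigner distribution in the quaternion LCT domain to each of the four summands. This is the two-dimensional counterpart of the Plancherel identity already proved above for the $1$D-WDOL, and it replaces every $\|\mathbb W^{\Lambda_1,\Lambda_2}_{(\cdot,\cdot)}\|^2$ by the $L^2$-norm of the corresponding instantaneous correlation component, namely the even and odd (in $x_3$) parts $\tilde h^e_{\bf t},\hat h^o_{\bf t},\hat h^e_{\bf t},\tilde h^o_{\bf t}$ of $h_{\bf t}({\bf x})=f({\bf t}+\tfrac{{\bf x}}{2})g^*({\bf t}-\tfrac{{\bf x}}{2})$. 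The constant $1/(2\pi b_3)$ inherited from Lemma \ref{rel with WDQLCT} is exactly what should reappear as the multiplier $2\pi|b_3|$ in the final statement.

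The heart of the argument is the recombination. Writing $h_{\bf t}=\tilde h_{\bf t}+\hat h_{\bf t}\tau_4$ as in (\ref{fun 3d h}) and splitting $\tilde h_{\bf t},\hat h_{\bf t}$ into even and odd parts in $x_3$, the $L^2$-orthogonality of even and odd functions kills the cross terms, giving $\|\tilde h_{\bf t}\|^2=\|\tilde h^e_{\bf t}\|^2+\|\tilde h^o_{\bf t}\|^2$ and $\|\hat h_{\bf t}\|^2=\|\hat h^e_{\bf t}\|^2+\|\hat h^o_{\bf t}\|^2$; together with the octonion modulus identity $|\tilde h_{\bf t}+\hat h_{\bf t}\tau_4|^2=|\tilde h_{\bf t}|^2+|\hat h_{\bf t}|^2$ from (\ref{o6}), the four parity pieces reassemble into $\|h_{\bf t}\|^2_{L^2(\mathbb R^3,\mathbb O)}$. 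Finally, the substitution ${\bf u}={\bf t}+\tfrac{{\bf x}}{2}$, ${\bf v}={\bf t}-\tfrac{{\bf x}}{2}$ has unit Jacobian, so
\begin{equation*}
\int_{\mathbb R^3}\int_{\mathbb R^3}\left|f\left({\bf t}+\frac{{\bf x}}{2}\right)\right|^2\left|g\left({\bf t}-\frac{{\bf x}}{2}\right)\right|^2 d{\bf x}\,d{\bf t}=\|f\|^2_{L^2(\mathbb R^3,\mathbb O)}\,\|g\|^2_{L^2(\mathbb R^3,\mathbb O)},
\end{equation*}
and multiplying through by $2\pi|b_3|$ yields the claimed identity.

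I expect the decisive obstacle to be the consistent tracking of the $b_3$-normalization through the two reductions: one must verify that the factor $1/(2\pi b_3)$ of Lemma \ref{rel with WDQLCT} combines with the normalization built into the WD-QLCT Rayleigh theorem to leave precisely $2\pi|b_3|$ (and not, say, a power of it), which is delicate because the third slot carries both the kernel constant $1/\sqrt{2\pi|b_3|}$ and the cosine/sine split in $x_3$. A secondary point is matching the four components produced by Lemma \ref{rel with WDQLCT} bijectively with the four orthogonal parity pieces of $h_{\bf t}$, so that no term is double-counted, and justifying the Fubini/Tonelli interchange of the $d{\bf x}$, $d{\bf t}$ and $d{\bf w}$ integrations, which is where the hypotheses $f,g\in L^2(\mathbb R^3,\mathbb O)$ and the standing integrability assumption on the correlation enter.
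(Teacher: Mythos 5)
Your proposal takes a genuinely different route from the paper, and it has a gap at precisely the step you flagged as delicate. The paper's own proof is a three-line reduction: it writes $\mathbb W^{\Lambda_1,\Lambda_2,\Lambda_3}_{f,g}({\bf t},\cdot)=\mathcal L^{\Lambda_1,\Lambda_2,\Lambda_3}_{\tau_1,\tau_2,\tau_4}[h_{\bf t}]$ via (\ref{3d WDOL in h}), invokes the Rayleigh theorem for the 3D-OLCT itself (Theorem 4 of \cite{li}), namely $2\pi|b_3|\,\bigl\|\mathcal L^{\Lambda_1,\Lambda_2,\Lambda_3}_{\tau_1,\tau_2,\tau_4}[h_{\bf t}]\bigr\|^2=\|h_{\bf t}\|^2$, and finishes with Fubini and the same unit-Jacobian substitution you use. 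You instead route through Lemma \ref{rel with WDQLCT} and a ``Rayleigh theorem for the WD-QLCT.'' The problem is that the four summands in that lemma are not genuine two-dimensional WD-QLCTs: as the lemma's proof shows, each has the form $\frac{1}{\sqrt{2\pi b_3}}\int_{\mathbb R^3}\tilde h^{e}_{\bf t}({\bf x})\,K^{\tau_1}_{\Lambda_1}(x_1,w_1)K^{\tau_2}_{\Lambda_2}(x_2,w_2)\,c_3\,d{\bf x}$ (and analogously with $s_3$), i.e.\ a 2D QLCT in $(x_1,x_2)$ tensored with an unnormalized cosine- or sine-LCT in $x_3$, depending on all three frequency variables $(w_1,w_2,w_3)$. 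A quaternionic Plancherel identity accounts for the $(x_1,x_2)\to(w_1,w_2)$ passage but says nothing about the $w_3$-integration, so the step ``each $\|\mathbb W^{\Lambda_1,\Lambda_2}_{(\cdot,\cdot)}\|^2$ becomes the $L^2$-norm of the corresponding parity component of $h_{\bf t}$'' does not follow from the theorem you cite. (The paper, for its part, never states a Rayleigh theorem for the WD-QLCT; the only WD-QLCT input it imports is the logarithmic UP of Lemma \ref{log WD-QLCT}.)

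To close the gap you would need a Plancherel identity in the third slot for the cosine-LCT on even functions and the sine-LCT on odd functions, and this is not innocuous: expanding $\bigl|\int u(x_3)\cos\theta_3\,dx_3\bigr|^2$ produces, besides the $\cos(\theta_3-\theta_3')$ term whose $w_3$-integral yields the delta and the expected multiple of $\|u\|^2$, a $\cos(\theta_3+\theta_3')$ term whose phase is quadratic in $w_3$ (the $d_3w_3^2$ contributions add instead of cancelling), so its $w_3$-integral is a Fresnel integral --- finite and in general nonzero --- and the chirp factor $e^{\tau_4 a_3x_3^2/2b_3}$ prevents these contributions from vanishing summand-by-summand on parity grounds. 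Arguing away those cross terms, and pinning the constant to exactly $2\pi|b_3|$, essentially amounts to reproving the 3D-OLCT Rayleigh theorem, i.e.\ the ingredient the paper simply cites; at that point the direct reduction is strictly simpler. Your recombination machinery is sound and coincides with what the paper does implicitly --- the even/odd orthogonality in $x_3$, the modulus identity $|\tilde h_{\bf t}+\hat h_{\bf t}\tau_4|^2=|\tilde h_{\bf t}|^2+|\hat h_{\bf t}|^2$ from (\ref{o6}), and the substitution ${\bf u}={\bf t}+\frac{{\bf x}}{2}$, ${\bf v}={\bf t}-\frac{{\bf x}}{2}$ with Jacobian of modulus one --- but the proof as proposed is incomplete at its central analytic step.
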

\begin{proof}
The Rayleigh’s theorem is valid for the 3D-OLCT and reads (see Thm.4 \cite{li})
\begin{equation*}
2\pi|b_3\left|\mathcal L^{\Lambda_1,\Lambda_2,\Lambda_3}_{\tau_1,\tau_2,\tau_4}[f]\right\|^2_{L^2(\mathbb R^2,\mathbb O)}=\|f\|^2_{L^2(\mathbb R^2,\mathbb O)}
\end{equation*}
Replacing $f({\bf x})$ by $h_{\bf t}({\bf x})$ given in (\ref{fun 3d h}), above equation becomes
\begin{equation*}
2\pi|b_3\left|\mathcal L^{\Lambda_1,\Lambda_2,\Lambda_3}_{\tau_1,\tau_2,\tau_4}[h_{\bf t}]\right\|^2_{L^2(\mathbb R^2,\mathbb O)}=\|h_{\bf t}\|^2_{L^2(\mathbb R^2,\mathbb O)}
\end{equation*}

Applying (\ref{3d WDOL in h}), above equation yields
 \begin{eqnarray*}
2\pi|b_3\left\|\mathbb W ^{\Lambda_1,\Lambda_2,\Lambda_3}_{f,g}({\bf t, w})\right\|^2_{L^2(\mathbb R^2,\mathbb O)}
&=&\left\|f\left({\bf t+\frac{x}{2}}\right)g^*\left({\bf t-\frac{x}{2}}\right)\right\|^2_{L^2(\mathbb R^2,\mathbb O)}\\
&=&\int_{\mathbb R^3}\int_{\mathbb R^3}\left|f\left({\bf t+\frac{x}{2}}\right)g^*\left({\bf t-\frac{x}{2}}\right)\right|^2d{\bf t}d{\bf x}.
\end{eqnarray*}
Applying the Fubini theorem and using suitable change of variables(just like earlier), we obtain
\begin{equation*}
2\pi|b_3|\|\mathbb W ^{\Lambda_1,\Lambda_2,\Lambda_3}_{f,g}({\bf t, w})\|^2_{L^2(\mathbb R^2,\mathbb O)}=\|f\|^2_{L^2(\mathbb R^2,\mathbb O)}.\|g\|^2_{L^2(\mathbb R^2,\mathbb O)}.
\end{equation*}
Which completes the proof.
\end{proof}

\begin{theorem} Let $f,g\in L^2(\mathbb R^3;\mathbb O),$  then the Riemann–Lebesgue lemma associated with 3D-WDOL holds with respect to ${\bf w},$ i.e., 
\begin{equation}
\lim_{|{\bf w}|\rightarrow 0}\left|\mathbb W ^{\Lambda_1,\Lambda_2,\Lambda_3}_{f,g}({\bf t, w})\right|\rightarrow 0, \quad {\bf w,x}\in \mathbb R^3.
\end{equation}
\end{theorem}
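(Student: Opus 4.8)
The plan is to transfer the problem to the classical Riemann--Lebesgue lemma through the identity $\mathbb{W}^{\Lambda_1,\Lambda_2,\Lambda_3}_{f,g}(\mathbf{t},\mathbf{w})=\mathcal{L}^{\Lambda_1,\Lambda_2,\Lambda_3}_{\tau_1,\tau_2,\tau_4}[h_{\mathbf{t}}](\mathbf{w})$ from (\ref{3d WDOL in h}), where $h_{\mathbf{t}}(\mathbf{x})=f(\mathbf{t}+\tfrac{\mathbf{x}}{2})g^{*}(\mathbf{t}-\tfrac{\mathbf{x}}{2})$. Since Riemann--Lebesgue decay is a statement about large frequencies, I read the conclusion in the regime $|\mathbf{w}|\to\infty$. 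First I would record the integrability needed to invoke the lemma: by the same change of variables $\mathbf{u}=\mathbf{t}+\tfrac{\mathbf{x}}{2}$ used in the proof of Theorem~\ref{plan 3D WDOL}, together with the Cauchy--Schwarz inequality,
\begin{equation*}
\int_{\mathbb{R}^3}\big|h_{\mathbf{t}}(\mathbf{x})\big|\,d\mathbf{x}\le\Big(\int_{\mathbb{R}^3}\big|f(\mathbf{t}+\tfrac{\mathbf{x}}{2})\big|^2d\mathbf{x}\Big)^{1/2}\Big(\int_{\mathbb{R}^3}\big|g(\mathbf{t}-\tfrac{\mathbf{x}}{2})\big|^2d\mathbf{x}\Big)^{1/2}=8\,\|f\|_{L^2}\|g\|_{L^2}<\infty,
\end{equation*}
so $h_{\mathbf{t}}\in L^1(\mathbb{R}^3,\mathbb{O})$ for every fixed $\mathbf{t}$.

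The delicate point is that a one--step reduction of the $3$D-OLCT to a single octonion Fourier transform is blocked by non-associativity: the three chirps carry distinct imaginary units $\tau_1,\tau_2,\tau_4$ and cannot be freely commuted past the Fourier factors. I would circumvent this by decomposing the kernel product first. Using the Euler--type expansion (\ref{eulerproduct}), $\mathbb{W}^{\Lambda_1,\Lambda_2,\Lambda_3}_{f,g}(\mathbf{t},\mathbf{w})$ becomes a finite octonion sum whose eight real components are scalar integrals of the eight real components of $h_{\mathbf{t}}$ against products $c_1c_2c_3,\,s_1c_2c_3,\dots,s_1s_2s_3$, where $c_k=\cos\theta_k$, $s_k=\sin\theta_k$ and $\theta_k=\tfrac{1}{2b_k}[a_kx_k^2-2x_kw_k+d_kw_k^2-\tfrac{\pi}{2}]$.

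Next I would linearize each trigonometric product into complex exponentials $e^{\,i(\pm\theta_1\pm\theta_2\pm\theta_3)}$. Each such exponential factors as a product over $k$ of an $x_k$-chirp $e^{\pm i a_kx_k^2/(2b_k)}$, a genuine Fourier mode $e^{\mp i w_kx_k/b_k}$, and a phase depending only on $w_k$. Because these are ordinary complex scalars, the chirps may now be absorbed into the real components of $h_{\mathbf{t}}$ with no ordering issue; absorption preserves the $L^1$ norm, and the $w_k$-phases have unit modulus and leave the integral. Hence every one of the finitely many scalar integrals equals a constant multiple of a classical three--dimensional Fourier transform of an $L^1$ function, evaluated at the rescaled argument $(\pm w_1/b_1,\pm w_2/b_2,\pm w_3/b_3)$, whose norm tends to $\infty$ precisely when $|\mathbf{w}|\to\infty$. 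The classical Riemann--Lebesgue lemma makes each of these tend to $0$, and since $|\mathbb{W}^{\Lambda_1,\Lambda_2,\Lambda_3}_{f,g}|$ is the square root of the sum of squares of its eight components, it too tends to $0$.

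The main obstacle, then, is not analytic but organizational: keeping the non-commutative and non-associative bookkeeping under control while reducing to scalar Fourier integrals, and verifying that chirp modulation does not disturb $L^1$-membership. An alternative route, avoiding the component expansion, is to start from Lemma~\ref{rel with WDQLCT}, which controls $|\mathbb{W}^{\Lambda_1,\Lambda_2,\Lambda_3}_{f,g}|^2$ by a sum of four squared WD-QLCT magnitudes; the Riemann--Lebesgue decay of each quaternionic term, each being a QLCT and hence reducible to two complex Fourier transforms of $L^1$ data, then yields the claim.
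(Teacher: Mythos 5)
Your proposal is correct, and at the top level it follows the same route as the paper: both pass through the identity (\ref{3d WDOL in h}), which identifies the 3D-WDOL with the 3D-OLCT of the correlation $h_{\mathbf t}$. The difference is what is then done with that identity. The paper's proof is a one-line appeal to the Riemann--Lebesgue lemma for the 3D-OLCT proved in \cite{li}, whereas you give a self-contained argument, and in doing so you supply two things the paper silently omits. First, the integrability hypothesis: the theorem assumes only $f,g\in L^2(\mathbb R^3,\mathbb O)$, while the cited lemma for the 3D-OLCT requires an integrable signal, so your Cauchy--Schwarz verification that $h_{\mathbf t}\in L^1(\mathbb R^3,\mathbb O)$ for each fixed $\mathbf t$ (with the correct Jacobian constant $8$) is genuinely needed rather than decorative. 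Second, you rightly read the conclusion in the regime $|\mathbf w|\to\infty$; the statement's $|\mathbf w|\to 0$ is evidently a typo, since a transform of an $L^1$ function is continuous in $\mathbf w$ and has no reason to vanish at the origin. Your reduction to the classical scalar lemma --- expanding the kernel product via (\ref{eulerproduct}) so that non-commutativity and non-associativity degenerate into sign bookkeeping on basis units multiplied by real scalar integrals, linearizing the trigonometric products into exponentials $e^{i(\pm\theta_1\pm\theta_2\pm\theta_3)}$, absorbing the unimodular chirps into the $L^1$ components, and evaluating a genuine Fourier transform at the rescaled frequencies $\pm w_k/b_k$ with $b_k\neq 0$ fixed --- is sound, and is in substance a proof of the black-box ingredient the paper imports from \cite{li}; what it buys is independence from that citation, at the cost of length. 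Your alternative via Lemma \ref{rel with WDQLCT}, bounding $|\mathbb W^{\Lambda_1,\Lambda_2,\Lambda_3}_{f,g}|^2$ by four quaternionic WD-QLCT terms and applying Riemann--Lebesgue to each, is likewise viable and somewhat closer in spirit to the paper's strategy of delegating to a lower-dimensional transform.
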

\begin{proof}
Since  Riemann–Lebesgue lemma holds for 3D-OLCT (see \cite{li}), therefore using relation between 3D-OLCT and 3D-WDOL the proof of theorem follows.
\end{proof}

Now, we shall establish  the relation between  3D-WDOL and  3D-WDLCT.

\begin{theorem}\label{th 3dslct rel}Let $f,g:\rightarrow \mathbb O$ be two octonion-valued signals and   $\mathcal W^{\Lambda_1,\Lambda_2,\Lambda_3}_{f,g}({\bf t,w})$ represents the 3D-WDLCT. Then the following equation holds
\begin{eqnarray}\label{eqn 3dstlct rel}
\nonumber  \mathbb W ^{\Lambda_1,\Lambda_2,\Lambda_3}_{f,g}({\bf t, w})&=&\dfrac{1}{4}\left\{(\mathcal W^{\Lambda_1,\Lambda_2,\Lambda_3}_{f,g}({\bf t,w})+\mathcal W^{\Lambda_1,\Lambda_2,\Lambda'_3}_{f,g}({\bf t,w})(1-\tau_3)\right.\\
\nonumber\qquad &&\left.+(\mathcal W^{\Lambda_1,\Lambda'_2,\Lambda_3}_{f,g}({\bf t,w})+\mathcal W^{\Lambda_1,\Lambda'_2,\Lambda'_3}_{f,g}({\bf t,w})(1+\tau_3)\right\}\\
\nonumber\qquad\quad&& +\dfrac{1}{4}\left\{(\mathcal W^{\Lambda_1,\Lambda_2,\Lambda_3}_{f,g}({\bf t,w})-\mathcal W^{\Lambda_1,\Lambda_2,\Lambda'_3}_{f,g}({\bf t,w})(1-\tau_3)\right.\\
\nonumber\qquad &&\left.+(\mathcal W^{\Lambda_1,\Lambda'_2,\Lambda_3}_{f,g}({\bf t,w})-\mathcal W^{\Lambda_1,\Lambda'_2,\Lambda'_3}_{f,g}({\bf t,w})(1+\tau_3)\right\}.\tau_5\\
\end{eqnarray}
where $\Lambda'_k=(a_k,-b_k,-c_k,d_k),\quad k=2,3.$
\end{theorem}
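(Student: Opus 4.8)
The plan is to reduce the assertion to an identity between integration kernels and then integrate it against the correlation $h_{\bf t}({\bf x})=f({\bf t+\frac{x}{2}})g^*({\bf t-\frac{x}{2}})$, invoking (\ref{3d WDOL in h}). The starting point is the Euler factorization (\ref{eulerproduct}): the WDOL kernel $K^{\tau_1}_{\Lambda_1}K^{\tau_2}_{\Lambda_2}K^{\tau_4}_{\Lambda_3}$ expands into eight \emph{separated} terms $c_1c_2c_3,\ s_1c_2c_3\tau_1,\dots,\ s_1s_2s_3\tau_7$. By contrast the 3D-WDLCT $\mathcal W^{\Lambda_1,\Lambda_2,\Lambda_3}_{f,g}$ is the single-unit counterpart, built from a kernel of the form $e^{\tau_1(\theta_1+\theta_2+\theta_3)}$ in which the three phases are entangled; the whole purpose of the four-term combination on the right is to disentangle this one exponential back into the eight octonion components.

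The first ingredient I would record is the parity-flip rule. Since $\theta_k=\frac{1}{2b_k}[a_kx_k^2-2x_kw_k+d_kw_k^2-\frac{\pi}{2}]$ scales like $b_k^{-1}$ while the bracket is untouched by $\Lambda_k\mapsto\Lambda'_k=(a_k,-b_k,-c_k,d_k)$, we get $\theta_k\mapsto-\theta_k$, so $c_k=\cos\theta_k$ is preserved and $s_k=\sin\theta_k$ is reversed. Hence summing a transform with its $\Lambda_3$-flip yields $e^{\tau_1\theta_3}+e^{-\tau_1\theta_3}=2c_3$ and collapses the kernel to $2c_3\,e^{\tau_1(\theta_1+\theta_2)}$, whereas the difference yields $2\tau_1 s_3\,e^{\tau_1(\theta_1+\theta_2)}$; this is precisely the division between the first brace (the $c_3$-block, carrying units $1,\tau_1,\tau_2,\tau_3$) and the second brace (the $s_3$-block). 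Keeping $\Lambda_2$ and $\Lambda'_2$ apart and attaching $(1-\tau_3)$ to the $\Lambda_2$-group and $(1+\tau_3)$ to the $\Lambda'_2$-group then separates $\theta_1$ from $\theta_2$: using $\tau_1(1-\tau_3)=\tau_1+\tau_2$ and $\tau_1(1+\tau_3)=\tau_1-\tau_2$ together with the addition formulae for $e^{\tau_1(\theta_1\pm\theta_2)}$, the sum collapses to $(c_1c_2+s_1c_2\tau_1+c_1s_2\tau_2+s_1s_2\tau_3)c_3$, i.e. exactly the $c_3$-block. Finally the global right factor $\tau_5$ transports the companion $s_3$-expression into the second octonion half via $\tau_1\tau_5=-\tau_4$, $\tau_2\tau_5=\tau_7$, $\tau_3\tau_5=-\tau_6$ read from Table I, delivering the $\tau_4,\tau_5,\tau_6,\tau_7$ terms, while the prefactor $\tfrac14$ just averages the two binary sign-choices.

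The step I expect to be the genuine obstacle is the non-associativity of $\mathbb O$: one is \emph{not} allowed to slide the constant octonions $(1\pm\tau_3)$ and $\tau_5$ through the integral past the octonion-valued $h_{\bf t}$, since the associator $[h_{\bf t},\,\kappa,\,\tau_5]$ need not vanish. The clean way around this is to repeat the device used in Lemma \ref{rel with WDQLCT}: write $h_{\bf t}=\tilde h_{\bf t}+\hat h_{\bf t}\tau_4$ with $\tilde h_{\bf t},\hat h_{\bf t}\in\mathbb H$ and treat the two quaternion halves separately, reducing every product to a computation inside the associative algebra $\mathbb H$ through the identities $\tau_4 a=\overline a\tau_4$, $a(b\tau_4)=(ba)\tau_4$, $(a\tau_4)b=(a\overline b)\tau_4$ and $(a\tau_4)(b\tau_4)=-\overline b a$ of Lemma \ref{lem1}. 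The $\hat h_{\bf t}\tau_4$ half is where the difficulty concentrates, because right multiplication by $\tau_5=\tau_1\tau_4$ couples it back to the $\tilde h_{\bf t}$ half; the resulting signs must be tracked term by term against Table I, and this bookkeeping is the most error-prone part of the argument. Once the kernel identity has been verified component by component against (\ref{eulerproduct}), integrating and applying (\ref{3d WDOL in h}) gives the stated relation at once.
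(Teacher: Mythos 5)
Your proposed route is in substance the paper's own route: the paper likewise flips $b_2,b_3$ to send $\theta_k\mapsto-\theta_k$, forms the sum/difference pairs, and right-multiplies by $\tau_3$ and then $\tau_5$ to reassemble the expanded kernel (\ref{eulerproduct}) (its steps (\ref{b3})--(\ref{b13})). Your cosine-block computation is correct --- the first brace does produce $c_3(c_1c_2+s_1c_2\tau_1+c_1s_2\tau_2+s_1s_2\tau_3)$ --- and your non-associativity warning is genuinely sharper than the paper, which slides $\tau_3$ and $\tau_5$ through the integral without comment. For real-valued signals the $\tau_3$-step happens to be safe, since every factor then lies in the associative quaternion subalgebra spanned by $1,\tau_1,\tau_2,\tau_3$; the $\tau_5$-step is not safe even then, because $\tau_5$ together with that subalgebra generates all of $\mathbb O$.

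The genuine gap is that the step you defer as ``error-prone bookkeeping'' is precisely where the claim is decided, and carrying it out with your own rules overturns the asserted conclusion. After the first stage, the second brace equals (including the $\tfrac14$) $s_3\left(-s_1c_2+c_1c_2\tau_1-s_1s_2\tau_2+c_1s_2\tau_3\right)$, and right-multiplying by $\tau_5$ with exactly the products you quote ($\tau_1\tau_5=-\tau_4$, $\tau_2\tau_5=\tau_7$, $\tau_3\tau_5=-\tau_6$) gives $-s_3\left(c_1c_2\tau_4+s_1c_2\tau_5+c_1s_2\tau_6+s_1s_2\tau_7\right)$ --- the \emph{negative} of the $\tau_4,\dots,\tau_7$ half of (\ref{eulerproduct}). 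So the combination does not ``deliver'' the second block as you assert; the differences in the second brace must be reversed (equivalently, the second brace needs a minus sign), and the same sign slip sits in the paper's own step (\ref{b12}), which re-associates $\big(\,\cdot\,(-\tau_1 s_3)\big)\tau_5$ into $\cdot\,(\tau_4 s_3)$ even though $(q\tau_1)\tau_5\neq q(\tau_1\tau_5)$ whenever $q$ has $\tau_2,\tau_3$ components (e.g. $(\tau_2\tau_1)\tau_5=\tau_6$ while $\tau_2(\tau_1\tau_5)=-\tau_6$). Relatedly, your closing reduction ``verify the kernel identity component by component, then integrate'' cannot work as stated for octonion-valued $h_{\bf t}$: with left-to-right multiplication the constant units reach the kernel only through associators involving $h_{\bf t}$ itself (already $(\tau_4\tau_1)\tau_3=-\tau_6\neq\tau_4(\tau_1\tau_3)=\tau_6$), so there is no $h$-independent kernel identity to check; the quaternionic split you invoke merely relocates this, since by $\tau_4 a=\overline{a}\tau_4$ the $\hat h_{\bf t}\tau_4$ half sees conjugated kernels and hence a different identity. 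After the sign correction the relation does hold at kernel level, and therefore for real-valued $f,g$; for genuinely octonion-valued signals it remains unproved by both your argument and the paper's.
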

\begin{proof}

For  $f,g\in L^2(\mathbb R^3)$ the 3D Wigner distribution associated with LCT (3D-WDLCT) is defined  corresponding to the 3D-LCT \cite{li} as:\\
   \begin{equation}\label{eqn b1}
\mathcal W^{\Lambda_1,\Lambda_2,\Lambda_3}_{f,g}({\bf t,w})=\dfrac{1}{2\pi\sqrt{2\pi|b_1b_2b_3|}}\int_{\mathbb R^3}f\left({\bf t+\frac{x}{2}}\right)g^*\left({\bf t-\frac{x}{2}}\right)e^{\tau_1\theta_1}e^{\tau_1\theta_2}e^{\tau_1\theta_3}d{\bf x}.
\end{equation}

Now for $\Lambda'_2=(a_2,-b_2,-c_2,d_2),$ then
\begin{equation}\label{eqn b2}
\mathcal W^{\Lambda_1,\Lambda'_2,\Lambda_3}_{f,g}({\bf t,w})=\dfrac{1}{2\pi\sqrt{2\pi|b_1b_2b_3|}}\int_{\mathbb R^3}f\left({\bf t+\frac{x}{2}}\right)g^*\left({\bf t-\frac{x}{2}}\right)e^{\tau_1\theta_1}e^{-\tau_1\theta_2}e^{\tau_1\theta_3}d{\bf x}.
\end{equation}
By equivalent definition of sine and cosine functions, we obtain
\begin{eqnarray}\label{b3}
\nonumber&&\frac{1}{2}\left(\mathcal W^{\Lambda_1,\Lambda_2,\Lambda_3}_{f,g}({\bf t,w})+\mathcal W^{\Lambda_1,\Lambda'_2,\Lambda_3}_{f,g}({\bf t,w})\right)\\ \
\nonumber&&\qquad\qquad=\dfrac{1}{2\pi\sqrt{2\pi|b_1b_2b_3|}}\int_{\mathbb R^3}f\left({\bf t+\frac{x}{2}}\right)g^*\left({\bf t-\frac{x}{2}}\right)e^{\tau_1\theta_1}c_2e^{\tau_1\theta_3}d{\bf x}.\\ \
\end{eqnarray}
And
\begin{eqnarray}\label{b4}
\nonumber&&\frac{1}{2}\left(\mathcal W^{\Lambda_1,\Lambda'_2,\Lambda_3}_{f,g}({\bf t,w})-\mathcal W^{\Lambda_1,\Lambda_2,\Lambda_3}_{f,g}({\bf t,w})\right)\\ \
\nonumber&&\qquad\qquad=\dfrac{1}{2\pi\sqrt{2\pi|b_1b_2b_3|}}\int_{\mathbb R^3}f\left({\bf t+\frac{x}{2}}\right)g^*\left({\bf t-\frac{x}{2}}\right)e^{\tau_1\theta_1}(-\tau_1s_2)e^{\tau_1\theta_3}d{\bf x}.\\ \
\end{eqnarray}
 Multiplying $\tau_3$ to (\ref{b4}) from right and using multiplication rules from Table \ref{table}, we have
\begin{eqnarray}\label{b5}
\nonumber&&\frac{1}{2}\left(\mathcal W^{\Lambda_1,\Lambda'_2,\Lambda_3}_{f,g}({\bf t,w})-\mathcal W^{\Lambda_1,\Lambda_2,\Lambda_3}_{f,g}({\bf t,w})\right)\tau_3\\ \
\nonumber&&\qquad\qquad=\dfrac{1}{2\pi\sqrt{2\pi|b_1b_2b_3|}}\int_{\mathbb R^3}f\left({\bf t+\frac{x}{2}}\right)g^*\left({\bf t-\frac{x}{2}}\right)e^{\tau_1\theta_1}(\tau_2s_2)e^{-\tau_1\theta_3}dx.\\ \
\end{eqnarray}
Adding (\ref{b3}) and (\ref{b5}), we have
\begin{eqnarray}\label{b6}
\nonumber&&\frac{1}{2}\left(\mathcal W^{\Lambda_1,\Lambda_2,\Lambda_3}_{f,g}({\bf t,w})+\mathcal W^{\Lambda_1,\Lambda'_2,\Lambda_3}_{f,g}({\bf t,w})\right)\\ \
\nonumber&&+\frac{1}{2}\left(\mathcal W^{\Lambda_1,\Lambda'_2,\Lambda_3}_{f,g}({\bf t,w})-\mathcal W^{\Lambda_1,\Lambda_2,\Lambda_3}_{f,g}({\bf t,w})\right)\tau_3\\ \
\nonumber&&\qquad\qquad=\dfrac{1}{2\pi\sqrt{2\pi|b_1b_2b_3|}}\int_{\mathbb R^3}f\left({\bf t+\frac{x}{2}}\right)g^*\left({\bf t-\frac{x}{2}}\right)e^{\tau_1\theta_1}e^{\tau_2\theta_2}e^{-\tau_1\theta_3}dx.\\ \
\end{eqnarray}
Let us  introduce new notation for simplification:
\begin{eqnarray}\label{b7}
\nonumber\textsf{ W}^{\frac{\Lambda_1,\Lambda_2,\Lambda_3}{\Lambda_1,\Lambda'_2,\Lambda_3}}_{f,g}({\bf t,w})&=&\frac{1}{2}\left(\mathcal W^{\Lambda_1,\Lambda_2,\Lambda_3}_{f,g}({\bf t,w})+\mathcal W^{\Lambda_1,\Lambda'_2,\Lambda_3}_{f,g}({\bf t,w})\right)\\ \
\nonumber&&+\frac{1}{2}\left(\mathcal W^{\Lambda_1,\Lambda'_2,\Lambda_3}_{f,g}({\bf t,w})-\mathcal W^{\Lambda_1,\Lambda_2,\Lambda_3}_{f,g}({\bf t,w})\right)\tau_3.\\ \
\end{eqnarray}
Now for $\Lambda'_3=(a_3,-b_3,-c_3,d_3),$ then
\begin{eqnarray}
\nonumber\textsf W^{\frac{\Lambda_1,\Lambda_2,\Lambda'_3}{\Lambda_1,\Lambda'_2,\Lambda'_3}}_{f,g}({\bf t,w})&=&\frac{1}{2}\left(\mathcal W^{\Lambda_1,\Lambda_2,\Lambda'_3}_{f,g}({\bf t,w})+\mathcal W^{\Lambda_1,\Lambda'_2,\Lambda'_3}_{f,g}({\bf t,w})\right)\\
\label{b8}&&+\frac{1}{2}\left(\mathcal W^{\Lambda_1,\Lambda'_2,\Lambda'_3}_{f,g}({\bf t,w})-\mathcal W^{\Lambda_1,\Lambda_2,\Lambda'_3}_{f,g}({\bf t,w})\right)\tau_3.\\
\label{b9}&=&\dfrac{1}{2\pi\sqrt{2\pi|b_1b_2b_3|}}\int_{\mathbb R^3}f\left({\bf t+\frac{x}{2}}\right)g^*\left({\bf t-\frac{x}{2}}\right)e^{\tau_1\theta_1}e^{\tau_2\theta_2}e^{\tau_3\theta_3}dx.
\end{eqnarray}
By following similar steps as before we get
\begin{eqnarray}\label{b10}
\nonumber&&\frac{1}{2}\left(\textsf W^{\frac{\Lambda_1,\Lambda_2,\Lambda_3}{\Lambda_1,\Lambda'_2,\Lambda_3}}_{f,g}({\bf t,w})+\textsf W^{\frac{\Lambda_1,\Lambda_2,\Lambda'_3}{\Lambda_1,\Lambda'_2,\Lambda'_3}}_{f,g}({\bf t,w})\right)\\ \
\nonumber&&\qquad\qquad=\dfrac{1}{2\pi\sqrt{2\pi|b_1b_2b_3|}}\int_{\mathbb R^3}f\left({\bf t+\frac{x}{2}}\right)g^*\left({\bf t-\frac{x}{2}}\right)e^{\tau_1\theta_1}e^{\tau_2\theta_2}c_3d{\bf x}.\\ \
\end{eqnarray}
And
\begin{eqnarray}\label{b11}
\nonumber&&\frac{1}{2}\left(\textsf W^{\frac{\Lambda_1,\Lambda_2,\Lambda_3}{\Lambda_1,\Lambda'_2,\Lambda_3}}_{f,g}({\bf t,w}))-\textsf W^{\frac{\Lambda_1,\Lambda_2,\Lambda'_3}{\Lambda_1,\Lambda'_2,\Lambda'_3}}_{f,g}({\bf t,w})\right)\\ \
\nonumber&&\qquad\qquad=\dfrac{1}{2\pi\sqrt{2\pi|b_1b_2b_3|}}\int_{\mathbb R^3}f\left({\bf t+\frac{x}{2}}\right)g^*\left({\bf t-\frac{x}{2}}\right)e^{\tau_1\theta_1}e^{\tau_2\theta_2}(-\tau_1s_3)dx.\\ \
\end{eqnarray}
On multiplying (\ref{b11}) from right by $\tau_5$ and using multiplication rules from Table \ref{table}, we have
\begin{eqnarray}\label{b12}
\nonumber&&\frac{1}{2}\left(\textsf W^{\frac{\Lambda_1,\Lambda_2,\Lambda_3}{\Lambda_1,\Lambda'_2,\Lambda_3}}_{f,g}({\bf t,w})-\textsf W^{\frac{\Lambda_1,\Lambda_2,\Lambda'_3}{\Lambda_1,\Lambda'_2,\Lambda'_3}}_{f,g}({\bf t,w})\right)\tau_5\\ \
\nonumber&&\qquad\qquad=\dfrac{1}{2\pi\sqrt{2\pi|b_1b_2b_3|}}\int_{\mathbb R^3}f\left({\bf t+\frac{x}{2}}\right)g^*\left({\bf t-\frac{x}{2}}\right)e^{\tau_1\theta_1}e^{\tau_2\theta_2}(\tau_4s_3)dx.\\ \
\end{eqnarray}
Adding (\ref{b10}) and (\ref{b12}), we get
\begin{eqnarray}\label{b13}
\nonumber&&\frac{1}{2}\left(\textsf W^{\frac{\Lambda_1,\Lambda_2,\Lambda_3}{\Lambda_1,\Lambda'_2,\Lambda_3}}_{f,g}({\bf t,w})+\textsf W^{\frac{\Lambda_1,\Lambda_2,\Lambda'_3}{\Lambda_1,\Lambda'_2,\Lambda'_3}}_{f,g}({\bf t,w})\right)\\ \
\nonumber&&+\frac{1}{2}\left(\textsf W^{\frac{\Lambda_1,\Lambda_2,\Lambda_3}{\Lambda_1,\Lambda'_2,\Lambda_3}}_{f,g}({\bf t,w})-\textsf W^{\frac{\Lambda_1,\Lambda_2,\Lambda'_3}{\Lambda_1,\Lambda'_2,\Lambda'_3}}_{f,g}({\bf t,w})\right)\tau_5\\ \
\nonumber&&\qquad\qquad=\dfrac{1}{2\pi\sqrt{2\pi|b_1b_2b_3|}}\int_{\mathbb R^3}f\left({\bf t+\frac{x}{2}}\right)g^*\left({\bf t-\frac{x}{2}}\right)e^{\tau_1\theta_1}e^{\tau_2\theta_2}e^{\tau_4\theta_3}dx.\\ \
\end{eqnarray}
On substituting (\ref{b7}) and (\ref{b8}) in (\ref{b13}), we get the desired result.
\end{proof}

\section{Uncertainty principles of the WDOL}\label{sec 4}
The uncertainty principles (UPs) lies in the heart of any integral transforms. In \cite{li,ownoolct} authors derived Heisenberg’s uncertainty principle and Donoho–Stark’s uncertainty principle,  Hausdorff–Young inequality,  logarithmic uncertainty inequality, Pitt’s inequality and local uncertainty inequality for the octonion linear canonical
transform and octonion offset linear canonical transform. Recently, in \cite{stoft,ownstolct} extend these UPs to the short-time octonion Fourier transform and short-time octonion linear canonical transform.
Considering the WDOL as an extension of WD-QLCT, so in this section we shall investigate some
uncertainty inequalities for the STOLCT. 

Lets begin with the Heisenberg’s uncertainty principle for the WDOL.
\begin{theorem}[Heisenberg’s uncertainty principle for the WDOL]Let $f,g\in L^1(\mathbb R^3,\mathbb O)\cap L^2(\mathbb R^3,\mathbb O),$ then 3D-WDOL satisfies following inequality
\begin{eqnarray}\label{hes}
\nonumber&&\left(\int_{\mathbb R^2}\int_{\mathbb R^2}{\bf x}^2\left|f\left({\bf t+\frac{x}{2} }\right)g^*\left({\bf t-\frac{x}{2} }\right)\right|^2d{\bf x}d{\bf t}\right)\left(\int_{\mathbb R^2}\int_{\mathbb R^2}{\bf w}^2|\mathbb W^{\Lambda_1,\Lambda_2,\Lambda_3}_{f,g}({\bf t,w})|^2d{\bf w}d{\bf t}\right)^\\
\nonumber&&\qquad\qquad\qquad\qquad\ge\frac{2}{\pi|b_3|}b^2_1b^2_2\|f\|^4_2\|g\|^4_2.\\
\end{eqnarray}

\end{theorem}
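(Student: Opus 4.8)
The plan is to deduce the stated inequality from the Heisenberg uncertainty principle already established for the 3D-OLCT in \cite{li}, by running that principle on the instantaneous correlation $h_{\bf t}({\bf x})=f\left({\bf t+\frac{x}{2}}\right)g^*\left({\bf t-\frac{x}{2}}\right)$ for each frozen ${\bf t}$ and then integrating out ${\bf t}$. The bridge is the identity \eqref{3d WDOL in h}, namely $\mathbb W^{\Lambda_1,\Lambda_2,\Lambda_3}_{f,g}({\bf t,w})=\mathcal L^{\Lambda_1,\Lambda_2,\Lambda_3}_{\tau_1,\tau_2,\tau_4}[h_{\bf t}]({\bf w})$, which lets us read the second factor of the claim as a weighted $L^2$-norm of the 3D-OLCT of $h_{\bf t}$.

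First I would record the 3D-OLCT Heisenberg inequality from \cite{li} and specialise its input to $h_{\bf t}$. Using the bridge identity, this produces, for each fixed ${\bf t}$, a pointwise inequality of the form
\begin{equation*}
\left(\int_{\mathbb R^3}|{\bf x}|^2\,|h_{\bf t}({\bf x})|^2\,d{\bf x}\right)\left(\int_{\mathbb R^3}|{\bf w}|^2\,\left|\mathbb W^{\Lambda_1,\Lambda_2,\Lambda_3}_{f,g}({\bf t,w})\right|^2\,d{\bf w}\right)\ge C\,\|h_{\bf t}\|_{L^2(\mathbb R^3,\mathbb O)}^4 ,
\end{equation*}
where the constant $C$ is forced by the kernel normalisation $\tfrac{1}{2\pi\sqrt{2\pi|b_1b_2b_3|}}$ appearing in \eqref{eulerproduct} together with the $2\pi|b_3|$ factor carried by the Rayleigh identity of Theorem \ref{plan 3D WDOL}.

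Next comes the only genuinely delicate step: promoting this family of pointwise-in-${\bf t}$ estimates to a single inequality between the ${\bf t}$-integrated quantities, which are precisely the two factors appearing in the claim. Writing $A({\bf t})$, $B({\bf t})$ and $D({\bf t})$ for the first factor, the second factor and $\|h_{\bf t}\|_{L^2}^2$ respectively, the pointwise bound reads $A({\bf t})B({\bf t})\ge C\,D({\bf t})^2$; taking square roots, integrating in ${\bf t}$, and invoking the Cauchy-Schwarz inequality $\int_{\mathbb R^3}\sqrt{A({\bf t})}\,\sqrt{B({\bf t})}\,d{\bf t}\le\left(\int_{\mathbb R^3}A\,d{\bf t}\right)^{1/2}\left(\int_{\mathbb R^3}B\,d{\bf t}\right)^{1/2}$, and finally squaring, yields $\left(\int A\,d{\bf t}\right)\left(\int B\,d{\bf t}\right)\ge C\left(\int D\,d{\bf t}\right)^2$.

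Finally I would evaluate $\int_{\mathbb R^3}D({\bf t})\,d{\bf t}=\int_{\mathbb R^3}\int_{\mathbb R^3}|h_{\bf t}({\bf x})|^2\,d{\bf x}\,d{\bf t}$ by the change of variables ${\bf u}={\bf t}+\tfrac{{\bf x}}{2}$, ${\bf v}={\bf t}-\tfrac{{\bf x}}{2}$ (unit Jacobian), exactly as in the proof of Theorem \ref{plan 3D WDOL}, obtaining $\|f\|_{L^2}^2\|g\|_{L^2}^2$ and hence $\left(\int D\,d{\bf t}\right)^2=\|f\|_2^4\|g\|_2^4$. Substituting back and identifying $C$ with $\tfrac{2}{\pi|b_3|}b_1^2b_2^2$ completes the argument. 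The main obstacle is the bookkeeping of this constant: one must carefully track the $b_k$-dependence through the 3D-OLCT Heisenberg bound and confirm that it is compatible with the $2\pi|b_3|$ normalisation of the Rayleigh/Plancherel identity invoked above. The Cauchy-Schwarz lifting, though the conceptual crux of the passage from a fixed ${\bf t}$ to the full integral, is otherwise routine.
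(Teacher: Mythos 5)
Your proposal is correct and follows essentially the same route as the paper's own proof: both apply the 3D-OLCT Heisenberg inequality of \cite{li} to the correlation $h_{\bf t}$ via the identity (\ref{3d WDOL in h}), lift the resulting pointwise-in-${\bf t}$ bound $A({\bf t})B({\bf t})\ge C\,D({\bf t})^2$ by taking square roots, integrating in ${\bf t}$, and applying Cauchy--Schwarz, and then evaluate $\int_{\mathbb R^3}\|h_{\bf t}\|_2^2\,d{\bf t}=\|f\|_2^2\,\|g\|_2^2$ via Fubini and the unit-Jacobian change of variables. The only cosmetic difference is that the paper quotes the constant $\tfrac{2}{\pi|b_3|}b_1^2b_2^2$ directly from the cited OLCT bound rather than re-deriving it from the kernel normalisation and the Rayleigh identity, as you suggest doing.
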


\begin{proof}For any signal $f\in L^1(\mathbb R^3,O)\cap L^2(\mathbb R^3,O),$ the Heisenberg’s uncertainty principle associated with octonion linear canonical transform reads \cite{li}
\begin{equation}\label{h1}
\int_{\mathbb R^2}{\bf x}^2|f({\bf x})|^2d{\bf x}\int_{\mathbb R^2}{\bf w}^2|\mathcal L^{\Lambda_1,\Lambda_2,\Lambda_3}_{\tau_1,\tau_2,\tau_4}[f]({\bf w})|^2d{\bf w}\ge\frac{2}{\pi|b_3|}b_1^2b_2^2\|f\|^2_2.
\end{equation}
Since $f,g\in L^1(\mathbb R^3,O)\cap L^2(\mathbb R^3,O),$ which implies $h_{\bf t}({\bf x})$ defined in (\ref{fun 3d h}) belongs to $L^1(\mathbb R^3,O)\cap L^2(\mathbb R^3,O)$. Therefore on replacing $f({\bf x})$ by $h_{\bf t}({\bf x})$ , (\ref{h1}) yields
\begin{equation}\label{h2}
\int_{\mathbb R^2}{\bf x}^2|h_{\bf t}({\bf x})|^2d{\bf x}\int_{\mathbb R^2}{\bf w}^2|\mathcal L^{\Lambda_1,\Lambda_2,\Lambda_3}_{\tau_1,\tau_2,\tau_4}[h_{\bf t}]({\bf w})|^2d{\bf w}\ge\frac{2}{\pi|b_3|}b_1^2b_2^2\|h_{\bf t}\|^2_2.
\end{equation}
Applying (\ref{3d WDOL in h}) to LHS of (\ref{h2}), we obtain
\begin{equation}\label{h3}
\int_{\mathbb R^2}{\bf x}^2|h_{\bf t}({\bf x})|^2d{\bf x}\int_{\mathbb R^2}{\bf w}^2|\mathbb W^{\Lambda_1,\Lambda_2,\Lambda_3}_{f,g}({\bf t,w})|^2d{\bf w}\ge\frac{2}{\pi|b_3|}b_1^2b_2^2\left(\int_{\mathbb R^2}|h_{\bf t}({\bf x})|^2d{\bf x}\right)^2.
\end{equation}
Then, we have 
\begin{eqnarray}\label{h4}
\nonumber&&\int_{\mathbb R^2}{\bf x}^2\left|f\left({\bf t+\frac{x}{2} }\right)g^*\left({\bf t-\frac{x}{2} }\right)\right|^2d{\bf x}\int_{\mathbb R^2}{\bf w}^2|\mathbb W^{\Lambda_1,\Lambda_2,\Lambda_3}_{f,g}({\bf t,w})|^2d{\bf w}\\
\nonumber&&\qquad\ge\frac{2}{\pi|b_3|}b_1^2b_2^2\left(\int_{\mathbb R^2}\left|f\left({\bf t+\frac{x}{2} }\right)g^*\left({\bf t-\frac{x}{2} }\right)\right|^2d{\bf x}\right)^2.\\
\end{eqnarray}
 To (\ref{h4}) we first take square root and then integrating it both sides with respect
to $d{\bf t}$, we get 
\begin{eqnarray}\label{h5}
\nonumber&&\int_{\mathbb R^2}\left\{\left(\int_{\mathbb R^2}{\bf x}^2\left|f\left({\bf t+\frac{x}{2} }\right)g^*\left({\bf t-\frac{x}{2} }\right)\right|^2d{\bf x}\right)^{1/2}\left(\int_{\mathbb R^2}{\bf w}^2|\mathbb W^{\Lambda_1,\Lambda_2,\Lambda_3}_{f,g}({\bf t,w})|^2d{\bf w}\right)^{1/2}\right\}d{\bf t}\\
\nonumber&&\qquad\ge\sqrt{\frac{2}{\pi|b_3|}}b_1b_2\int_{\mathbb R^2}\int_{\mathbb R^2}\left|f\left({\bf t+\frac{x}{2} }\right)g^*\left({\bf t-\frac{x}{2} }\right)\right|^2d{\bf x}d{\bf t}.\\
\end{eqnarray}
Applying the Cauchy–Schwarz inequality to the LHS
of (\ref{h5}), we have
\begin{eqnarray}\label{h6}
\nonumber&&\left(\int_{\mathbb R^2}\int_{\mathbb R^2}{\bf x}^2\left|f\left({\bf t+\frac{x}{2} }\right)g^*\left({\bf t-\frac{x}{2} }\right)\right|^2d{\bf x}d{\bf t}\right)^{1/2}\left(\int_{\mathbb R^2}\int_{\mathbb R^2}{\bf w}^2|\mathbb W^{\Lambda_1,\Lambda_2,\Lambda_3}_{f,g}({\bf t,w})|^2d{\bf w}d{\bf t}\right)^{1/2}\\
\nonumber&&\qquad\ge\sqrt{\frac{2}{\pi|b_3|}}b_1b_2\int_{\mathbb R^2}\int_{\mathbb R^2}\left|f\left({\bf t+\frac{x}{2} }\right)g^*\left({\bf t-\frac{x}{2} }\right)\right|^2d{\bf x}d{\bf t}.\\
\end{eqnarray}
Further applying the Fubini theorem and using suitable change of variables(just like earlier) to RHS of (\ref{h6}), we obtain
\begin{eqnarray}\label{h7}
\nonumber&&\left(\int_{\mathbb R^2}\int_{\mathbb R^2}{\bf x}^2\left|f\left({\bf t+\frac{x}{2} }\right)g^*\left({\bf t-\frac{x}{2} }\right)\right|^2d{\bf x}d{\bf t}\right)^{1/2}\left(\int_{\mathbb R^2}\int_{\mathbb R^2}{\bf w}^2|\mathbb W^{\Lambda_1,\Lambda_2,\Lambda_3}_{f,g}({\bf t,w})|^2d{\bf w}d{\bf t}\right)^{1/2}\\
\nonumber&&\qquad\ge\sqrt{\frac{2}{\pi|b_3|}}b_1b_2\|f\|^2_2\|g\|^2_2.\\
\end{eqnarray}
Which completes the proof.
\end{proof}
\begin{lemma}[Logarithmic Uncertainty Principle for the WD-QLCT]\label{log WD-QLCT}For $f,g\in S(\mathbb R^2,\mathbb H)$, we have the following inequality
\begin{eqnarray}\label{eqn log WD-QLCT}
\nonumber&&\int_{\mathbb R^2}\int_{\mathbb R^2}\ln|{\bf x}|\left| f\left({\bf t+\frac{x}{2} }\right) {g^*}\left({\bf t-\frac{x}{2} }\right)\right|^2d{\bf x}d{\bf t}+\int_{\mathbb R^2}\int_{\mathbb R^2}\ln|{\bf w}|\left|\mathbb W^{\Lambda_1,\Lambda_2}_{f, g} ({\bf t,w})\right|^2d{\bf w}d{\bf t}\\
&&\qquad\qquad\qquad\qquad\qquad\qquad\ge(D+\ln|{\bf b}|)| f|^2_2| g|^2_2,
\end{eqnarray}
where $\phi(\frac{1}{2})-\ln\pi,$ $\phi(t)=\frac{\Gamma'(t)}{\Gamma(t)}$ and $\Gamma$ is a Gamma function.
\end{lemma}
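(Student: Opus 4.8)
The plan is to lift the logarithmic uncertainty principle known for the two-dimensional quaternion linear canonical transform (QLCT) up to the Wigner level, mirroring the strategy used above for the Heisenberg inequality. The input result I would invoke is the QLCT logarithmic inequality: for every $\varphi\in S(\mathbb R^2,\mathbb H)$,
\begin{equation*}
\int_{\mathbb R^2}\ln|{\bf x}|\,|\varphi({\bf x})|^2\,d{\bf x}+\int_{\mathbb R^2}\ln|{\bf w}|\left|\mathcal L^{\Lambda_1,\Lambda_2}_{\tau_1,\tau_2}[\varphi]({\bf w})\right|^2 d{\bf w}\ge (D+\ln|{\bf b}|)\|\varphi\|_2^2,
\end{equation*}
with $D=\phi(\tfrac12)-\ln\pi$; this is the quaternionic counterpart of the classical Beckner-type logarithmic inequality and may be taken as given.

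For each fixed ${\bf t}$, I would set $h_{\bf t}({\bf x})=f({\bf t+\frac{x}{2}})g^*({\bf t-\frac{x}{2}})$ and observe that, since $f,g\in S(\mathbb R^2,\mathbb H)$, the correlation $h_{\bf t}$ again belongs to $S(\mathbb R^2,\mathbb H)$ for every ${\bf t}$; this is exactly what licenses the substitution $\varphi=h_{\bf t}$ into the input inequality. Using the defining relation between the WD-QLCT and the QLCT of the correlation, namely $\mathbb W^{\Lambda_1,\Lambda_2}_{f,g}({\bf t,w})=\mathcal L^{\Lambda_1,\Lambda_2}_{\tau_1,\tau_2}[h_{\bf t}]({\bf w})$ (the two-dimensional analogue of \eqref{3d WDOL in h}), the transform term becomes the Wigner term, giving a per-${\bf t}$ estimate whose right-hand side is $(D+\ln|{\bf b}|)\int_{\mathbb R^2}|h_{\bf t}({\bf x})|^2\,d{\bf x}$.

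The key structural point, which makes this argument lighter than the Heisenberg proof, is that both sides are now additive in the two integrals rather than multiplicative; hence no square-root or Cauchy--Schwarz step is needed. I would simply integrate the per-${\bf t}$ inequality over ${\bf t}\in\mathbb R^2$, with monotonicity of the integral preserving the direction of the inequality, and apply Fubini to cast the left-hand side into the double-integral form in the statement. On the right-hand side, the change of variables ${\bf u}={\bf t+\frac{x}{2}}$, ${\bf v}={\bf t-\frac{x}{2}}$ has unimodular Jacobian and decouples $\int_{\mathbb R^2}\int_{\mathbb R^2}|f({\bf t+\frac{x}{2}})g^*({\bf t-\frac{x}{2}})|^2\,d{\bf x}\,d{\bf t}$ into $\|f\|_2^2\|g\|_2^2$, producing the stated constant $(D+\ln|{\bf b}|)\|f\|_2^2\|g\|_2^2$.

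I expect the only genuine obstacle to lie in the analytic bookkeeping rather than in the algebra: one must verify that every logarithmic integral converges and that the Fubini interchange is legitimate. This is precisely why the hypothesis confines $f,g$ to the Schwartz class $S(\mathbb R^2,\mathbb H)$ rather than merely $L^2$, since the rapid decay simultaneously controls the mild $\ln|{\bf x}|$ singularity near the origin and the logarithmic growth at infinity, guaranteeing absolute integrability throughout and thereby validating each interchange of integration used above.
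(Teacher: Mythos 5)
Your proposal is correct and is essentially the paper's own argument: the paper proves this lemma in one line by citing the procedure of Theorem 1 in \cite{Dnew} applied to the QLCT logarithmic uncertainty principle of \cite{mb}, and that procedure is exactly your lift — substitute the correlation $h_{\bf t}({\bf x})=f\left({\bf t+\frac{x}{2}}\right)g^*\left({\bf t-\frac{x}{2}}\right)$ into the QLCT inequality, use $\mathbb W^{\Lambda_1,\Lambda_2}_{f,g}({\bf t,w})=\mathcal L^{\Lambda_1,\Lambda_2}_{\tau_1,\tau_2}[h_{\bf t}]({\bf w})$, integrate in ${\bf t}$ (no Cauchy--Schwarz needed since both sides are additive), and decouple the right-hand side via the unimodular change of variables. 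You have merely written out in full the details the paper delegates to the cited references, including the Schwartz-class justification of convergence and the Fubini interchange.
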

\begin{proof}
 Applying the procedure defined in proof  of Theorem 1 in \cite{Dnew} to the  Logarithmic Uncertainty Principle associated with QLCT \cite{mb}, we get the desired result.
\end{proof}

\begin{theorem}[Logarithmic Uncertainty Principle for the WDOL]Let $f,g\in \mathcal S(\mathbb R^3,\mathbb O),$then
\begin{eqnarray}
\nonumber&&\int_{\mathbb R^2}\int_{\mathbb R^2}\ln|{\bf x}|\left|f\left({\bf t+\frac{x}{2} }\right)g^*\left({\bf t-\frac{x}{2} }\right)\right|^2d{\bf x}d{\bf t}+{2\pi b_3}\int_{\mathbb R^2}\int_{\mathbb R^2}\ln|{\bf w}|\left|\mathbb W ^{\Lambda_1,\Lambda_2,\Lambda_3}_{f,g}({\bf t, w})\right|^2d{\bf w}d{\bf t}\\
\label{log}&&\qquad\qquad\qquad\qquad\qquad\qquad\ge(D+\ln|{\bf b}|)| f|^2_2|g|^2_2.
\end{eqnarray}
\end{theorem}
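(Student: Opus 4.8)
The plan is to lift the quaternion logarithmic inequality of Lemma \ref{log WD-QLCT} to the octonion setting through the four-fold splitting of the 3D-WDOL established in Lemma \ref{rel with WDQLCT}. Writing $h_{\bf t}({\bf x})=f\left({\bf t}+\frac{{\bf x}}{2}\right)g^*\left({\bf t}-\frac{{\bf x}}{2}\right)=\tilde h_{\bf t}+\hat h_{\bf t}\tau_4$ and decomposing $\tilde h_{\bf t},\hat h_{\bf t}$ into their even and odd parts in the variable $x_3$, Lemma \ref{rel with WDQLCT} may be read pointwise in $({\bf t,w})$ as
\begin{equation*}
2\pi b_3\left|\mathbb W^{\Lambda_1,\Lambda_2,\Lambda_3}_{f,g}({\bf t,w})\right|^2=\left|\mathbb W^{\Lambda_1,\Lambda_2}_{(\tilde f,\tilde g)^e}\right|^2+\left|\mathbb W^{\Lambda_1,\Lambda_2}_{(\hat f,\hat g)^o}\right|^2+\left|\mathbb W^{\Lambda_1,\Lambda_2}_{(\hat f,\hat g)^e}\right|^2+\left|\mathbb W^{\Lambda_1,\Lambda_2}_{(\tilde f,\tilde g)^o}\right|^2.
\end{equation*}
First I would apply Lemma \ref{log WD-QLCT} to each of the four quaternion-valued correlation pairs that generate the WD-QLCTs on the right, obtaining four inequalities of the schematic form $(\text{position term})+(\text{frequency term})\ge(D+\ln|{\bf b}|)\,\|\cdot\|_2^2\,\|\cdot\|_2^2$, and then add them.

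On the frequency side the summed term $\sum\int_{\mathbb R^2}\int_{\mathbb R^2}\ln|{\bf w}|\left|\mathbb W^{\Lambda_1,\Lambda_2}_{(\cdot,\cdot)}\right|^2 d{\bf w}\,d{\bf t}$ collapses, upon inserting the displayed identity under the integral sign, to exactly $2\pi b_3\int_{\mathbb R^2}\int_{\mathbb R^2}\ln|{\bf w}|\left|\mathbb W^{\Lambda_1,\Lambda_2,\Lambda_3}_{f,g}({\bf t,w})\right|^2 d{\bf w}\,d{\bf t}$, which is precisely the second term of the claimed bound. This step is pure bookkeeping once the factor $2\pi b_3$ supplied by Lemma \ref{rel with WDQLCT} is tracked carefully.

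For the position integrals and the right-hand side I would invoke a parity argument. Since $\ln|{\bf x}|=\ln\sqrt{x_1^2+x_2^2+x_3^2}$ is even in $x_3$, every cross term that pairs an even part with an odd part in $x_3$ is globally odd in $x_3$ and integrates to zero; hence the four position integrals reassemble into $\int_{\mathbb R^2}\int_{\mathbb R^2}\ln|{\bf x}|\left(|\tilde h_{\bf t}|^2+|\hat h_{\bf t}|^2\right)d{\bf x}\,d{\bf t}$. The octonion modulus identity $|a+b\tau_4|^2=|a|^2+|b|^2$ from (\ref{o6}) then rewrites this as $\int_{\mathbb R^2}\int_{\mathbb R^2}\ln|{\bf x}|\left|f\left({\bf t}+\frac{{\bf x}}{2}\right)g^*\left({\bf t}-\frac{{\bf x}}{2}\right)\right|^2 d{\bf x}\,d{\bf t}$, the first term of the bound. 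The identical cancellation, now with the weight replaced by $1$, shows that the four right-hand norm products sum to $\|f\|_2^2\|g\|_2^2$, after a Fubini step and the change of variables ${\bf u}={\bf t}+\frac{{\bf x}}{2},\ {\bf v}={\bf t}-\frac{{\bf x}}{2}$ already used in Theorem \ref{plan 3D WDOL}.

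The main obstacle I anticipate is exactly this recombination bookkeeping: one must verify that the even/odd-in-$x_3$ decomposition forced by Lemma \ref{rel with WDQLCT} is compatible with the even weight $\ln|{\bf x}|$, so that the cross terms truly cancel, and that the four constants $(D+\ln|{\bf b}|)\|\cdot\|_2^2\|\cdot\|_2^2$ coalesce into the single constant $(D+\ln|{\bf b}|)\|f\|_2^2\|g\|_2^2$ with no residual $b_3$-dependent factor surviving. Keeping the $2\pi b_3$ weight consistent between the frequency term, where it multiplies the WDOL, and the right-hand side, where it must disappear entirely, is the delicate point; everything else reduces to routine applications of Lemma \ref{log WD-QLCT} and Fubini's theorem.
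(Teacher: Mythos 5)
Your proposal follows essentially the same route as the paper's proof: split the 3D-WDOL into four WD-QLCTs via Lemma \ref{rel with WDQLCT}, apply the quaternionic logarithmic uncertainty principle of Lemma \ref{log WD-QLCT} to each piece, and sum, with the position-side integrals recombined through the even/odd decomposition in $x_3$ and the modulus identity (\ref{o6}). Your explicit parity-cancellation argument for the cross terms is merely a more careful justification of the decomposition the paper asserts in (\ref{l3})--(\ref{l5}), so the two arguments coincide in substance (including sharing the same glossed-over bookkeeping in passing from the four norm products to the single product $\|f\|_2^2\|g\|_2^2$).
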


\begin{proof} From Lemma \ref{rel with WDQLCT},3D-WDOL has been divided into four WD-QLCTs as
\begin{eqnarray}\label{l1}
\nonumber\left|\mathbb W ^{\Lambda_1,\Lambda_2,\Lambda_3}_{f,g}({\bf t, w})\right|^2
\nonumber&=&\frac{1}{2\pi b_3}\left(\left|\mathbb W^{\Lambda_1,\Lambda_2}_{(\tilde f,\tilde g)^e} ({\bf t,w})\right|^2+\left|\mathbb W^{\Lambda_1,\Lambda_2}_{(\hat f,\hat g)^o} ({\bf t, w})\right|^2\right.\\
\nonumber&&+\left.\left|\mathbb W^{\Lambda_1,\Lambda_2}_{(\hat f,\hat g)^e} ({\bf t, w})\right|^2+\left|\mathbb W^{\Lambda_1,\Lambda_2}_{(\tilde f,\tilde g)^o} ({\bf t, w})\right|^2\right),\\
\end{eqnarray}
where $\tilde f,\, \hat f,\, \tilde g,\,\hat g\in L^2(\mathbb R^2,\mathbb H).$\\
Thus
\begin{eqnarray}\label{l2}
\nonumber&&{2\pi b_3}\int_{\mathbb R^2}\int_{\mathbb R^2}\ln|{\bf w}|\left|\mathbb W ^{\Lambda_1,\Lambda_2,\Lambda_3}_{f,g}({\bf t, w})\right|^2d{\bf w}d{\bf t}\\
\nonumber&&=\left[\int_{\mathbb R^2}\int_{\mathbb R^2}\ln|{\bf w}|\left|\mathbb W^{\Lambda_1,\Lambda_2}_{(\tilde f,\tilde g)^e} ({\bf t,w})\right|^2d{\bf w}d{\bf t}+\int_{\mathbb R^2}\int_{\mathbb R^2}\ln|{\bf w}|\left|\mathbb W^{\Lambda_1,\Lambda_2}_{(\hat f,\hat g)^o} ({\bf t, w})\right|^2d{\bf w}d{\bf t}\right.\\
\nonumber&&+\left.\int_{\mathbb R^2}\int_{\mathbb R^2}\ln|{\bf w}|\left|\mathbb W^{\Lambda_1,\Lambda_2}_{(\hat f,\hat g)^e} ({\bf t, w})\right|^2d{\bf w}d{\bf t}+\int_{\mathbb R^2}\int_{\mathbb R^2}\ln|{\bf w}|\left|\mathbb W^{\Lambda_1,\Lambda_2}_{(\tilde f,\tilde g)^o} ({\bf t, w})\right|^2d{\bf w}d{\bf t}\right].\\
\end{eqnarray}
Since every octonion function say $h_{\bf t}({\bf x})$ defined in (\ref{fun 3d h}) can be written in the quaternion form as:
\begin{equation}\label{l3}
|h_{\bf t}({\bf x})|^2=|\tilde h^e_{\bf t}({\bf x})|^2+|\tilde h^o_{\bf t}({\bf x})|^2+|\hat h^e_{\bf t}({\bf x})|^2+|\hat h^o_{\bf t}({\bf x})|^2.
\end{equation}
Therefore
\begin{eqnarray}\label{l4}
\nonumber\int_{\mathbb R^2}\int_{\mathbb R^2}\ln|{\bf x}||h_{\bf t}({\bf x})|^2d{\bf x}{\bf t}&=&\int_{\mathbb R^2}\int_{\mathbb R^2}\ln|{\bf x}||\tilde h^e_{\bf t}({\bf x})|^2d{\bf x}{\bf t}+\int_{\mathbb R^2}\int_{\mathbb R^2}\ln|{\bf x}||\tilde h^o_{\bf t}({\bf x})|^2d{\bf x}{\bf t}\\
\nonumber&&+\int_{\mathbb R^2}\int_{\mathbb R^2}\ln|{\bf x}||\hat h^e_{\bf t}({\bf x})|^2d{\bf x}{\bf t}+\int_{\mathbb R^2}\int_{\mathbb R^2}\ln|{\bf x}||\hat h^o_{\bf t}({\bf x})|^2d{\bf x}{\bf t}.\\
\end{eqnarray}
Now (\ref{l4}), implies
\begin{eqnarray}\label{l5}
\nonumber&&\int_{\mathbb R^2}\int_{\mathbb R^2}\ln|{\bf x}|\left|f\left({\bf t+\frac{x}{2} }\right)g^*\left({\bf t-\frac{x}{2} }\right)\right|^2d{\bf x}d{\bf t}\\
\nonumber&&\qquad\qquad=\int_{\mathbb R^2}\int_{\mathbb R^2}\ln|{\bf x}|\left|\left\{\tilde f\left({\bf t+\frac{x}{2} }\right)\tilde {g^*}\left({\bf t-\frac{x}{2} }\right)\right\}^e\right|^2d{\bf x}d{\bf t}\\
\nonumber&&\qquad\qquad+\int_{\mathbb R^2}\int_{\mathbb R^2}\ln|{\bf x}|\left|\left\{\tilde f\left({\bf t+\frac{x}{2} }\right)\tilde {g^*}\left({\bf t-\frac{x}{2} }\right)\right\}^o\right|^2d{\bf x}d{\bf t}\\
\nonumber&&\qquad\qquad+\int_{\mathbb R^2}\int_{\mathbb R^2}\ln|{\bf x}|\left|\left\{\hat f\left({\bf t+\frac{x}{2} }\right)\hat {g^*}\left({\bf t-\frac{x}{2} }\right)\right\}^e\right|^2d{\bf x}d{\bf t}\\
\nonumber&&\qquad\qquad+\int_{\mathbb R^2}\int_{\mathbb R^2}\ln|{\bf x}||\left|\left\{\hat f\left({\bf t+\frac{x}{2} }\right)\hat {g^*}\left({\bf t-\frac{x}{2} }\right)\right\}^o\right|^2d{\bf x}d{\bf t}.\\
\end{eqnarray}
By the Logarithmic Uncertainty Principle for the WD-QLCT given in (\ref{}), we get
\begin{eqnarray}\label{l6}
\nonumber&&\int_{\mathbb R^2}\int_{\mathbb R^2}\ln|{\bf w}|\left|\mathbb W^{\Lambda_1,\Lambda_2}_{(\tilde f,\tilde g)^e} ({\bf t,w})\right|^2d{\bf w}d{\bf t}+\int_{\mathbb R^2}\int_{\mathbb R^2}\ln|{\bf x}|\left|\left\{\tilde f\left({\bf t+\frac{x}{2} }\right)\tilde {g^*}\left({\bf t-\frac{x}{2} }\right)\right\}^e\right|^2d{\bf x}d{\bf t}\\
&&\qquad\qquad\qquad\qquad\qquad\qquad\ge(D+\ln|{\bf b}|)|\tilde f^e|^2_2|\tilde g^e|^2_2.
\end{eqnarray}
Similarly
\begin{eqnarray}\label{l7}
\nonumber&&\int_{\mathbb R^2}\int_{\mathbb R^2}\ln|{\bf w}|\left|\mathbb W^{\Lambda_1,\Lambda_2}_{(\tilde f,\tilde g)^o} ({\bf t,w})\right|^2d{\bf w}d{\bf t}+\int_{\mathbb R^2}\int_{\mathbb R^2}\ln|{\bf x}|\left|\left\{\tilde f\left({\bf t+\frac{x}{2} }\right)\tilde {g^*}\left({\bf t-\frac{x}{2} }\right)\right\}^o\right|^2d{\bf x}d{\bf t}\\
&&\qquad\qquad\qquad\qquad\qquad\qquad\ge(D+\ln|{\bf b}|)|\tilde f^o|^2_2|\tilde g^o|^2_2
\end{eqnarray}
And 
\begin{eqnarray}\label{l8}
\nonumber&&\int_{\mathbb R^2}\int_{\mathbb R^2}\ln|{\bf w}|\left|\mathbb W^{\Lambda_1,\Lambda_2}_{(\hat f,\hat g)^e} ({\bf t,w})\right|^2d{\bf w}d{\bf t}+\int_{\mathbb R^2}\int_{\mathbb R^2}\ln|{\bf x}|\left|\left\{\hat f\left({\bf t+\frac{x}{2} }\right)\hat {g^*}\left({\bf t-\frac{x}{2} }\right)\right\}^e\right|^2d{\bf x}d{\bf t}\\
&&\qquad\qquad\qquad\qquad\qquad\qquad\ge(D+\ln|{\bf b}|)|\hat f^e|^2_2|\hat g^e|^2_2.
\end{eqnarray}
And
\begin{eqnarray}\label{l9}
\nonumber&&\int_{\mathbb R^2}\int_{\mathbb R^2}\ln|{\bf w}|\left|\mathbb W^{\Lambda_1,\Lambda_2}_{(\hat f,\hat g)^o} ({\bf t,w})\right|^2d{\bf w}d{\bf t}+\int_{\mathbb R^2}\int_{\mathbb R^2}\ln|{\bf x}|\left|\left\{\hat f\left({\bf t+\frac{x}{2} }\right)\hat {g^*}\left({\bf t-\frac{x}{2} }\right)\right\}^o\right|^2d{\bf x}d{\bf t}\\
\nonumber&&\qquad\qquad\qquad\qquad\qquad\qquad\ge(D+\ln|{\bf b}|)|\hat f^o|^2_2|\hat g^o|^2_2.
\end{eqnarray}
Collecting equations (\ref{l6}), (\ref{l7}), (\ref{l8}) and(\ref{l9}), we obtain
\begin{eqnarray}
\nonumber&&{2\pi b_3}\int_{\mathbb R^2}\int_{\mathbb R^2}\ln|{\bf w}|\left|\mathbb W ^{\Lambda_1,\Lambda_2,\Lambda_3}_{f,g}({\bf t, w})\right|^2d{\bf w}d{\bf t}+\int_{\mathbb R^2}\int_{\mathbb R^2}\ln|{\bf x}|\left|f\left({\bf t+\frac{x}{2} }\right)g^*\left({\bf t-\frac{x}{2} }\right)\right|^2d{\bf x}d{\bf t}\\
\nonumber&&\qquad\qquad\qquad\qquad\qquad\qquad\ge(D+\ln|{\bf b}|)| f|^2_2|g|^2_2.\\
\end{eqnarray}
Which completes the proof.
\end{proof}
\begin{theorem}[Hausdorff-Young inequality for the WDOL]Let $2\le p<\infty$ and $f,g\in L^2(\mathbb R^2,\mathbb O),$ we have
\begin{equation}
\int_{\mathbb R^2}\int_{\mathbb R^2}\left|\mathbb W^{\Lambda_1,\Lambda_2,\Lambda_3}_{f,g}({\bf t,w})\right|^qd{\bf w}d{\bf t}\le E\frac{|b_1b_2|^{-\frac{1}{2}+\frac{1}{q}}}{(2\pi)^{\frac{1}{2q}+1}|b_3|^{\frac{1}{2q}}}\|f\|^q_2\|g\|^q_2.
\end{equation}
\end{theorem}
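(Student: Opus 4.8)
The plan is to obtain the inequality by transplanting the Hausdorff--Young inequality already available for the 3D-OLCT in \cite{li} onto the correlation function, exactly as the reconstruction formula, Rayleigh's theorem and the Heisenberg principle for the 3D-WDOL were all deduced from their 3D-OLCT counterparts through the identity (\ref{3d WDOL in h}). Concretely, for each fixed ${\bf t}$ I would treat $h_{\bf t}({\bf x})=f\left({\bf t}+\frac{\bf x}{2}\right)g^*\left({\bf t}-\frac{\bf x}{2}\right)$ as a function of ${\bf x}$ alone, so that (\ref{3d WDOL in h}) reads $\mathbb W^{\Lambda_1,\Lambda_2,\Lambda_3}_{f,g}({\bf t},{\bf w})=\mathcal L^{\Lambda_1,\Lambda_2,\Lambda_3}_{\tau_1,\tau_2,\tau_4}[h_{\bf t}]({\bf w})$, and the whole problem becomes one of controlling an $L^q$ norm of a 3D-OLCT, parametrised by ${\bf t}$.

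The key steps, in order, are as follows. First I would invoke the 3D-OLCT Hausdorff--Young inequality: for the conjugate exponent $q$ determined by $p$ it bounds $\|\mathcal L^{\Lambda_1,\Lambda_2,\Lambda_3}_{\tau_1,\tau_2,\tau_4}[\phi]\|_{L^q_{\bf w}}$ by the displayed constant times $\|\phi\|_{L^{q'}_{\bf x}}$, the asymmetric powers of $|b_1|,|b_2|,|b_3|$ and $2\pi$ reflecting that the $\tau_4$-axis is treated separately from the quaternionic axes $\tau_1,\tau_2$ (cf. the factor $\frac{1}{2\pi b_3}$ produced by Lemma \ref{rel with WDQLCT}). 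Applying this with $\phi=h_{\bf t}$ and raising to the $q$-th power gives a bound, pointwise in ${\bf t}$, of $\int_{\mathbb R^3}|\mathbb W^{\Lambda_1,\Lambda_2,\Lambda_3}_{f,g}({\bf t},{\bf w})|^q\,d{\bf w}$ by the constant (to the $q$-th power) times $\|h_{\bf t}\|^q_{L^{q'}_{\bf x}}$. Integrating this inequality in ${\bf t}$ over $\mathbb R^3$ and invoking Fubini's theorem to interchange the order of integration then reduces everything to estimating the mixed-norm integral $\int_{\mathbb R^3}\|h_{\bf t}\|^q_{L^{q'}_{\bf x}}\,d{\bf t}$.

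The decisive step is the evaluation of this last integral, and it is here that I expect the main obstacle. I would apply the change of variables ${\bf u}={\bf t}+\frac{\bf x}{2}$, ${\bf v}={\bf t}-\frac{\bf x}{2}$, whose Jacobian is constant, in order to decouple the contributions of $f$ and $g$ and collapse the right-hand side into a product $\|f\|_2^q\|g\|_2^q$, any residual numerical factor being absorbed into $E$. The delicate point is precisely that this factorisation is clean only when the inner power and the outer power are matched correctly, so one must verify that the $L^{q'}$-power in ${\bf x}$ and the $L^q$-power in ${\bf t}$ combine to reproduce exactly the $L^2$-norms claimed; tracking the powers of $|b_1|,|b_2|,|b_3|$ and $2\pi$ emerging from the 3D-OLCT constant so that they assemble into $\dfrac{|b_1b_2|^{-\frac12+\frac1q}}{(2\pi)^{\frac{1}{2q}+1}|b_3|^{\frac{1}{2q}}}$, together with justifying the Fubini interchange on the assumed $L^1\cap L^2$ data, are the only genuinely technical matters. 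Everything else follows the template of the earlier proofs in this section.
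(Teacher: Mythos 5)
Your first two steps coincide exactly with the paper's proof: it applies the Hausdorff--Young inequality for the 3D-OLCT from \cite{ownstolct} to $h_{\bf t}$ via the identity (\ref{3d WDOL in h}), raises to the $q$-th power, integrates in ${\bf t}$, and arrives at the same mixed-norm integral $\int\left(\int\left|h_{\bf t}({\bf x})\right|^p d{\bf x}\right)^{q/p}d{\bf t}$ that you reach (you also rightly note the constant should appear to the $q$-th power after raising, a sloppiness in the paper that $E$ absorbs). The divergence is in the last step, and there your plan has a genuine gap. A constant-Jacobian change of variables cannot by itself decouple $f$ from $g$: substituting ${\bf y}={\bf t}+\frac{\bf x}{2}$ in the inner integral, with ${\bf t}$ fixed, gives (up to a constant) $\left(|f|^p\ast|g|^p\right)(2{\bf t})$, so the quantity to be bounded is $c\,\bigl\||f|^p\ast|g|^p\bigr\|_{q/p}^{q/p}$, a convolution raised to the outer power $q/p$. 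Only in the degenerate case $p=q=2$ does that outer power equal $1$, so that Fubini plus the substitution $({\bf u},{\bf v})=\left({\bf t}+\frac{\bf x}{2},{\bf t}-\frac{\bf x}{2}\right)$ factors the double integral into $\|f\|_2^2\|g\|_2^2$; for $p<2<q$ the two functions remain entangled inside the convolution and no substitution alone produces the product $\|f\|_2^q\|g\|_2^q$ --- which is exactly the mismatch of inner and outer powers you flag as the "delicate point" but do not resolve.

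The missing ingredient is Young's convolution inequality with exponents $r_1=r_2=\frac{2}{p}$ and $r=\frac{q}{p}$: since $\frac1p+\frac1q=1$ one checks $\frac{1}{r_1}+\frac{1}{r_2}=p=1+\frac{p}{q}=1+\frac1r$, and $p\le 2\le q$ guarantees $r_1,r_2,r\ge 1$, whence $\bigl\||f|^p\ast|g|^p\bigr\|_{q/p}\le\bigl\||f|^p\bigr\|_{2/p}\bigl\||g|^p\bigr\|_{2/p}=\|f\|_2^p\,\|g\|_2^p$; raising to the power $\frac{q}{p}$ yields precisely $E\|f\|_2^q\|g\|_2^q$. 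For what it is worth, the paper does not carry this computation out either: it quotes the mixed-norm estimate wholesale from \cite{20aa} (Theorem 1, relation (3.3)), a reference that is in fact absent from the bibliography. So your proposal shares the paper's skeleton step for step, but the one step the paper outsources is the one your argument, as written, cannot close without adding Young's inequality.
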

\begin{proof}For $1\le p\le2$ and $\frac{1}{p}+\frac{1}{q},$ the Hausdorff-Young inequality associated with  OLCT \cite{ownstolct} is given by
\begin{equation}\label{y1}
\left\|\mathcal L^{\Lambda_1,\Lambda_2,\Lambda_3}_{\tau_1,\tau_1,\tau_3}[f]\right\|_q\le\frac{|b_1b_2|^{-\frac{1}{2}+\frac{1}{q}}}{(2\pi)^{\frac{1}{2q}+1}|b_3|^{\frac{1}{2q}}}\|f\|_p.
\end{equation}
Making use of  (\ref{3d WDOL in h}), we can write
\begin{equation}\label{y2}
\left(\int_{\mathbb R^2}\left|\mathbb W^{\Lambda_1,\Lambda_2,\Lambda_3}_{f,g}({\bf t,w})\right|^qd{\bf w}\right)^{\frac{1}{q}}=\left(\int_{\mathbb R^2}\left|\mathcal L^{\Lambda_1,\Lambda_2,\Lambda_3}_{\tau_1,\tau_1,\tau_3}[h_{\bf t}]\right|^qd{\bf w}\right)^{\frac{1}{q}},
\end{equation}
where $$h_{\bf t}({\bf x})=f\left({\bf t+\frac{x}{2} }\right)g^*\left({\bf t-\frac{x}{2} }\right).$$
Therefore,on applying (\ref{y1}) to the right hand side of (\ref{y2}), we get
\begin{eqnarray}\label{y3}
\nonumber\left(\int_{\mathbb R^2}\left|\mathbb W^{\Lambda_1,\Lambda_2,\Lambda_3}_{f,g}({\bf t,w})\right|^qd{\bf w}\right)^{\frac{1}{q}}&\le&\frac{|b_1b_2|^{-\frac{1}{2}+\frac{1}{q}}}{(2\pi)^{\frac{1}{2q}+1}|b_3|^{\frac{1}{2q}}}\|h_{\bf t}\|_p\\
\nonumber&=&\frac{|b_1b_2|^{-\frac{1}{2}+\frac{1}{q}}}{(2\pi)^{\frac{1}{2q}+1}|b_3|^{\frac{1}{2q}}}\left\|f\left({\bf t+\frac{x}{2} }\right)g^*\left({\bf t-\frac{x}{2} }\right)\right\|_p\\
\nonumber&=&\frac{|b_1b_2|^{-\frac{1}{2}+\frac{1}{q}}}{(2\pi)^{\frac{1}{2q}+1}|b_3|^{\frac{1}{2q}}}\left(\int_{\mathbb R^2}\left |f\left({\bf t+\frac{x}{2} }\right)g^*\left({\bf t-\frac{x}{2} }\right)\right|^pd{\bf x}\right)^{\frac{1}{p}}.\\
\end{eqnarray}
Further simplifying, we have
\begin{eqnarray}\label{y4}
\nonumber\left(\int_{\mathbb R^2}\left|\mathbb W^{\Lambda_1,\Lambda_2,\Lambda_3}_{f,g}({\bf t,w})\right|^qd{\bf w}\right)&\le&\frac{|b_1b_2|^{-\frac{1}{2}+\frac{1}{q}}}{(2\pi)^{\frac{1}{2q}+1}|b_3|^{\frac{1}{2q}}}\left(\int_{\mathbb R^2}\left |f\left({\bf t+\frac{x}{2} }\right)g^*\left({\bf t-\frac{x}{2} }\right)\right|^pd{\bf x}\right)^{\frac{q}{p}}.\\
\end{eqnarray}
Integrating (\ref{y4}) both sides with respect $d{\bf t},$ it yields
\begin{eqnarray}\label{y5}
\nonumber&&\int_{\mathbb R^2}\left(\int_{\mathbb R^2}\left|\mathbb W^{\Lambda_1,\Lambda_2,\Lambda_3}_{f,g}({\bf t,w})\right|^qd{\bf w}\right)d{\bf t}\\
\nonumber&&\qquad\qquad\qquad\le\frac{|b_1b_2|^{-\frac{1}{2}+\frac{1}{q}}}{(2\pi)^{\frac{1}{2q}+1}|b_3|^{\frac{1}{2q}}}\int_{\mathbb R^2}\left(\int_{\mathbb R^2}\left |f\left({\bf t+\frac{x}{2} }\right)g^*\left({\bf t-\frac{x}{2} }\right)\right|^pd{\bf x}\right)^{\frac{q}{p}}d{\bf t}.\\
\end{eqnarray}
Using procedure defined in \cite{20aa}( see  theorem 1  relation (3.3)), we have
\begin{equation}\label{y6}
\int_{\mathbb R^2}\left(\int_{\mathbb R^2}\left |f\left({\bf t+\frac{x}{2} }\right)g^*\left({\bf t-\frac{x}{2} }\right)\right|^pd{\bf x}\right)^{\frac{q}{p}}d{\bf t}\le E\left\{\|f\|_2\|g\|_2\right\}^q,
\end{equation}
where E is positive constant.\\
Thus (\ref{y5}) and (\ref{y6}) together yields
\begin{equation*}
\int_{\mathbb R^2}\int_{\mathbb R^2}\left|\mathbb W^{\Lambda_1,\Lambda_2,\Lambda_3}_{f,g}({\bf t,w})\right|^qd{\bf w}d{\bf t}\le E\frac{|b_1b_2|^{-\frac{1}{2}+\frac{1}{q}}}{(2\pi)^{\frac{1}{2q}+1}|b_3|^{\frac{1}{2q}}}\|f\|^q_2\|g\|^q_2.
\end{equation*}
Which completes the proof.
\end{proof}

\section{Conclusion}\label{sec 5}

In the present study, we examined  three major objectives: first, we have studied  the notion of
 1D-WDOL in the framework of time-frequency analysis and examined  all
of its basic properties by means of the one dimensional octonion linear canonical transform transforms. Second, we propose the definition of 3D-WDOL and establish the fundamental properties associated with it, which 
includes the  reconstruction formula, Rayleigh's theorem and Riemann-Lebesgue Lemma.
Moreover, we also establish the relation of WDOL with 3D-LCT and WD-QLCT. Third, well known uncertainty  principles (UPs) including Heisenberg's UP, Logarithmic UP and Hausdorff-Young inequality associated with WDOL are established.
\section*{Declarations}
\begin{itemize}
\item  Availability of data and materials: The data is provided on the request to the authors.
\item Competing interests: The authors have no competing interests.
\item Funding: No funding was received for this work
\item Author's contribution: Both the authors equally contributed towards this work.
\item Acknowledgements: This work is supported by the  Research  Grant\\
(No. JKST\&IC/SRE/J/357-60) provided by JKSTIC, Govt. of Jammu and Kashmir,
India.

\end{itemize}

\end{document}